\newcommand{\norm}[1]{\left|\hspace*{2.5pt} \!\!\left| #1\right|\hspace*{2.5pt} \!\!\right|}
\renewcommand{\pmod}[1]{\left( \mathrm{ mod\;}#1\right)}
\newcommand{\Z}{\mathbb{Z}} % set of integers
\newcommand{\R}{\mathbb R}
\newcommand{\N}{\mathbb{N}}
\newcommand{\F}{\mathbb{F}}
\newcommand{\NF}{\operatorname{NF}}
\newcommand{\cN}{\mathcal N}
\newcommand{\cP}{\mathcal P}
\newcommand{\cU}{\mathcal U}
\newcommand{\cJ}{\mathcal{J}}
\newcommand{\cS}{\mathcal S}
\newcommand{\abs}[1]{\left|#1\right|} %absolute value
\newcommand{\floor}[1]{\left\lfloor #1 \right\rfloor}
\newcommand{\cR}{\mathcal{R}}
\newcommand{\AJh}{N(q,h)}
\newcommand{\cA}{\mathcal A}
\newcommand{\br}{\bm{r}}
\newtheorem{theorem}{Theorem}%[section]
\newtheorem{lemma}{Lemma}
\newtheorem{corollary}{Corollary}
\newtheorem{remark}{Remark}
\theoremstyle{remark}
\definecolor{orange}{rgb}{1,0.5,0}
\definecolor{Red}{rgb}{.795,0.015,0.017}
\definecolor{Ggreen}{rgb}{0.,0.675,0.0128}
\definecolor{Bblue}{rgb}{0.16,.32,0.91}
\begin{document}

\title[Angular distribution towards the points of the neighbor-flips modular curve]{Angular distribution towards the points of the neighbor-flips modular curve 
seen by a fast moving observer}
\author{Jack Anderson, Florin P. Boca, Cristian Cobeli, Alexandru Zaharescu}

%%%%%%%%%%%%%%%%%%%%%%%

% \author[Jack Anderson]{Jack Anderson}
\address[Jack Anderson]{Department of Mathematics, University of Illinois at Urbana-Champaign, Urbana, IL 61801, USA.}
% \address[Jack Anderson]{Department of Mathematics, University of Illinois, 1409 West Green 
% Street, Urbana, IL 61801, USA.}
\email{jacka4@illinois.edu}

% \author[Florin P. Boca]{Florin P. Boca}
\address[Florin P. Boca]{Department of Mathematics, University of Illinois at Urbana-Champaign, Urbana, IL 61801, USA.}
% \address[Florin P. Boca]{Department of Mathematics, University of Illinois, 1409 West Green 
% Street, Urbana, IL 61801, USA.}
\email{fboca@illinois.edu}

% \author[Cristian Cobeli]{Cristian Cobeli}
\address[Cristian Cobeli]{"Simion Stoilow" Institute of Mathematics of the Romanian Academy,~21 Calea Grivitei Street, P. O. Box 1-764, Bucharest 014700, Romania}
\email{cristian.cobeli@imar.ro}

% \author{Alexandru Zaharescu}
\address[Alexandru Zaharescu]{Department of Mathematics,University of Illinois at Urbana-Champaign, Urbana, IL 61801, USA,
% \address[Alexandru Zaharescu]{Department of Mathematics, University of Illinois, 1409 West Green 
% Street, Urbana, IL 61801, USA,
% .}
%
% \address[Alexandru Zaharescu]{
and 
"Simion Stoilow" Institute of Mathematics of the Romanian Academy,~21 
Calea Grivitei 
Street, P. O. Box 1-764, Bucharest 014700, Romania}
\email{zaharesc@illinois.edu}

\subjclass[2020]{Primary 11P21.
Secondary: 11B05,   11L07.}

% {Primary 11B37; Secondary 11B39, 11B50.}
% 11B99 11B85
% 11Bxx Sequences and sets
% 11B99 None of the above, but in this section
% 05A05 Permutations, words, matrices
% 68R15 Combinatorics on words
% 11M41 Other Dirichlet series and zeta functions 

\thanks{Key words and phrases: Lattice points, curves over a finite field, angles, moving observer, gap distribution}

\begin{abstract}
Let $h$ be a fixed non-zero integer.
For every $t\in \mathbb{R}_+$ and every prime $p$, consider the angles between rays from an observer
located at the point $(-tJ_p^2,0)$ on the real axis towards the set of all integral solutions $(x,y)$ of
the equation $y^{-1}-x^{-1}\equiv h \pmod{p}$ in the square $[-J_p,J_p]^2$, where $J_p=(p-1)/2$.
This set of points can be seen as a generic model for any target set with points randomly distributed on the integer coordinates of a square, 
in which, apart from a small number of exceptions, exactly one point lies above any abscissa.

We prove the existence of the limiting gap distribution for this set of angles as $p\rightarrow \infty$,
providing explicit formulas for the corresponding density function, which turns out to be independent of $h$.
The resulted gap distribution function shows the existence of a sequence of threshold points between which the distribution of seen angles has different shapes.
This provides a tool of reference in guiding the observer, which allows one to find and control the position relative to the universe of observed points.
\end{abstract}

% Distribution angles to lattice points from a fast moving observer

% \author{Alexandru Zaharescu}
% \address[Alexandru Zaharescu]{Department of Mathematics, University of Illinois, 1409 West Green 
% Street, Urbana, IL 61801, USA,
% % .}
% %
% % \address[Alexandru Zaharescu]{
% and 
% "Simion Stoilow" Institute of Mathematics of the Romanian Academy,~21 
% Calea Grivitei 
% Street, P. O. Box 1-764, Bucharest 014700, Romania}
% \email{zaharesc@illinois.edu}

% \subjclass[2010]{Primary 11K36; Secondary 11Y16, 68Q25}
% 11B99 11B85
% 11Bxx Sequences and sets
% 11B99 None of the above, but in this section
% 05A05 Permutations, words, matrices
% 68R15 Combinatorics on words
% 11M41 Other Dirichlet series and zeta functions 

% \thanks{Key words and phrases: Visible lattice points.}

\maketitle

\bigskip

\noindent

\bigskip

%%%%%%%%%%%%%%%%%%%%%%%%%%%%%%%%%%%%%%% SECTION INTRODUCTION
\section{Introduction}\label{S:Introduction}

Some time ago, the last three authors~\cite{BCZ2000} investigated the angular gap distribution of integer lattice
points in dilated regions $J\Omega$, $J\to\infty$, where $\Omega\subseteq\R^2$ is a star-shaped bounded domain with
respect to $O = (0, 0)$, with piece-wise $C^1$ boundary, and the points are seen from an observer
located at the origin. The methods were number theoretical, ultimately relying on 
Weil--Estermann bounds for Kloosterman exponential sums. The particular case $\Omega=\{(x,y): 0<y<x<1\}$ is highly relevant, because the slopes of the segment lines $OP$,
$P\in J\Omega\cap\Z^2$, represent exactly the Farey points of height $J$ in the interval $[0, 1]$, which constitute a significant object in Number Theory, Ergodic Theory, and Topology.
This work has
generated significant interest, leading to deep connections with homogeneous dynamics and the
theory of the periodic Lorentz gas established by Marklof and Str\"ombergsson~\cite{MS2010}, to the
horocyclic flow on the modular surface found by Athreya and Cheung~\cite{AC2014} and developed more recently by Marklof and Welsh \cite{MW2023}
(see also Bonanno et al.~\cite{BDI2022} and references therein for the latest developments), and to new statistical properties
of hyperbolic lattice points (see~\cite{BPZ2014,CKLW2021,KK2015,MV2018,PP2022,RS2017}). 
In particular, the
situation where the observer is located at a fixed point, other than the origin inside the unit
square, and $\Omega$ is the unit disk was considered in
an article of Marklof and Str\"ombergsson~\cite{MS2010}, 
where the corresponding limiting gap
distribution was shown to coincide with the Elkies-McMullen gap distribution of the sequence $\sqrt{n}\pmod{1}$~\cite{EM2004} when the observer is located at a fixed point with at least one irrational coordinate.

Very recently, the authors investigated a new model~\cite{ABCZ2023}, where the observer is no longer fixed as $J\to\infty$. Instead, the observer moves away from the origin along the horizontal axis with constant acceleration, while $\Omega$ is the unit square $[-1,1]^2$. In this situation the existence of the limiting gap distribution as $J\to\infty$ was established and an algorithm was provided to compute its general formula.

In Granville et al.~\cite{GSZ2005}, the distribution of points on some algebraic curve modulo a large prime $p$ was studied. For a survey of results in that area, in particular for results on points on modular hyperbolas, the reader is referred to Shparlinski~\cite{Shp2012}. In particular, for distribution of inverses, see~\cite{CVZ2003, CZ2000, CZ2001}. In the current paper, we consider a related model with arithmetic flavor, with the moving observer only looking at a certain subset of the integer points inside $J\Omega$. In particular, if $q$ is a positive integer, we take 
$J=J_q=(q-1)/2$ and consider the points $([n]^{-1}, [n+h]^{-1})$ for a fixed integer $h$, 
where $n\in\Z/q\Z$, and $[n]^{-1}$ represents the inverse of $[n]$, the representative of $n\pmod{q}$, in $[-J_q, J_q]$. We write
\begin{equation*}
    \cA(q, h) = \big\{([n]^{-1}, [n+h]^{-1}) : n\in\Z/q\Z,\ \gcd(n,q)=\gcd(n+h,q)=1\big\} \subseteq [-J_q,J_q]^2,
\end{equation*}
and denote $\AJh=\#\cA(q,h)$.
When $q=p$ is a prime and $p\nmid h$, we have $N(p,h)=p-2$.

This is the \textit{neighbor-flips} modular curve, which was chosen because it provides a
suitable model for randomly distributed point sets where, 
with only a few exceptions, precisely one point is located above every abscissa.
On the other hand, the arithmetic properties of the curve in a finite field allow 
to employ the necessary estimates of exponential sums (see Section~\ref{SectionDistributionRationalFunctions})
essential in the explicit calculation 
of the limiting gap distribution function.

Let us remark that the sets $\cA(q,h)$ are the elements of a partition of the translated square $(\Z/q\Z)^2-(J_q,J_q)$
from which the vertical and the horizontal lines whose order are not relatively prime to $q$ are excluded.
Note that the union over $h$ of the sets $\cA(q,h)$ is largest when $q$ is prime,
resulting in only one row and one column remaining uncovered in the square $(\Z/q\Z)^2$. 
At the other end, when $q$ is highly composite, 
points from periodic vertical and horizontal friezes of various widths are excluded from 
$(\Z/q\Z)^2$. Figure~\ref{FigureNFal} illustrates two contrasting such examples with two consecutive $q$'s.

In addition, it is worth noting that the properties of the inverses (see~\cite{CVZ2003, CZ2000,CZ2001}) lead to the sets $\cA(p,h)$ 
with~$p$ prime 
becoming uniformly distributed in $(\Z/p\Z)^2$ 
when $p$ is sufficiently large, for all $h$, except in cases where $p$ divides~$h$. 
Then, particularly when $h=0$, the modular curve $\cA(p,0)$ covers only
the first diagonal of $(\Z/p\Z)^2$ and, as a result, 
it is not evenly distributed in the whole square.

For a fixed $t>0$, we place our observer at the point $P_{t,J} = (-tJ^2, 0)$
(see Figure~\ref{Figure-pe359he26-s} for an example of such a position where the observer is looking towards the points of curve $\cA(q,h)$).
Consider the angles $\angle (PP_{t,J}O)$ between rays from $P_{t,J}$ to points of $\cA(q,h)$ and the $x$-axis (with points below the $x$-axis being ascribed a negative angle) and label them in ascending order as
$
\alpha_{\text{min}}= \alpha_1 \leq \alpha_2 \leq \cdots \leq \alpha_{\AJh} =\alpha_{\text{max}}.
$
\begin{center}
\begin{figure}[htb]
\centering
\hfill
   \includegraphics[angle=0,width=0.77\textwidth]{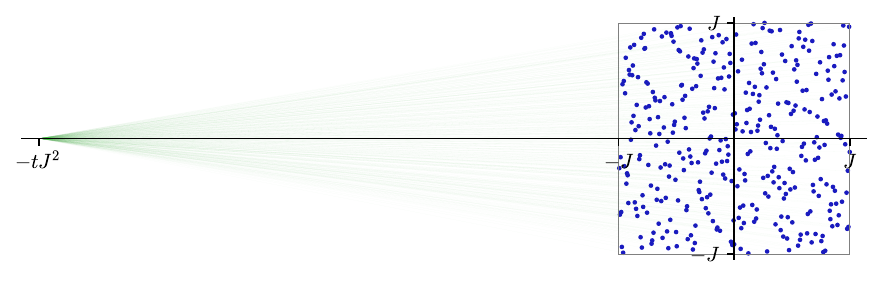}  
\hfill\mbox{}
\vspace*{-3mm}
\caption{The points of $\cA(359,26)$, seen by an observer placed at $(-tJ_{359}^2,0)$.
}
 \label{Figure-pe359he26-s}
 \end{figure}
\end{center}

Furthermore, we write the average angle between consecutive points as
\begin{equation*}
    \Delta_{av} := \frac{1}{\AJh-1}\sum_{j=1}^{\AJh-1} (\alpha_{j+1}-\alpha_j) = 
\frac{\alpha_{\operatorname{max}} -\alpha_{\operatorname{min}}}{\AJh-1}\,.
\end{equation*}
We can now define our \emph{gap-distribution function} by
\begin{equation*}
G_{t,q,h}(\lambda) := \frac{1}{\AJh-1}\#\left\{j\in\{1,\dots,\AJh-1\} : \alpha_{j+1}-\alpha_j \geq \lambda\Delta_{av}\right\}.
\end{equation*}

The limit $\lim_{q\to\infty}G_{t,q,h}(\lambda)$ may not exist in general 
(see Figure~\ref{F:GLambdaNumerical}, $G_{t,q,h}(\lambda)$ can vary wildly from one value of~$q$ to the next), 
but can exist when restricted to certain families. For any subset \mbox{$\cN\subseteq\N$} 
with infinite cardinality, we can write $G_{t,\cN,h}(\lambda) = \lim_{n\to\infty}G_{t,q_n}(\lambda)$, 
where $\cN=\{q_1, q_2, \dots\}$ when written in increasing order.
\begin{figure}[htb]
\centering
\includegraphics[width=0.48\textwidth]{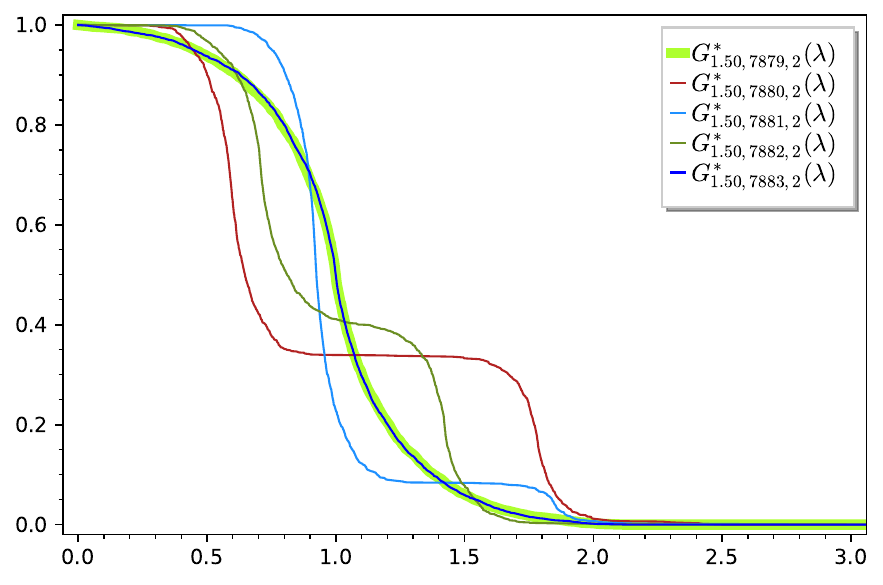}
\hfill
\includegraphics[width=0.48\textwidth]{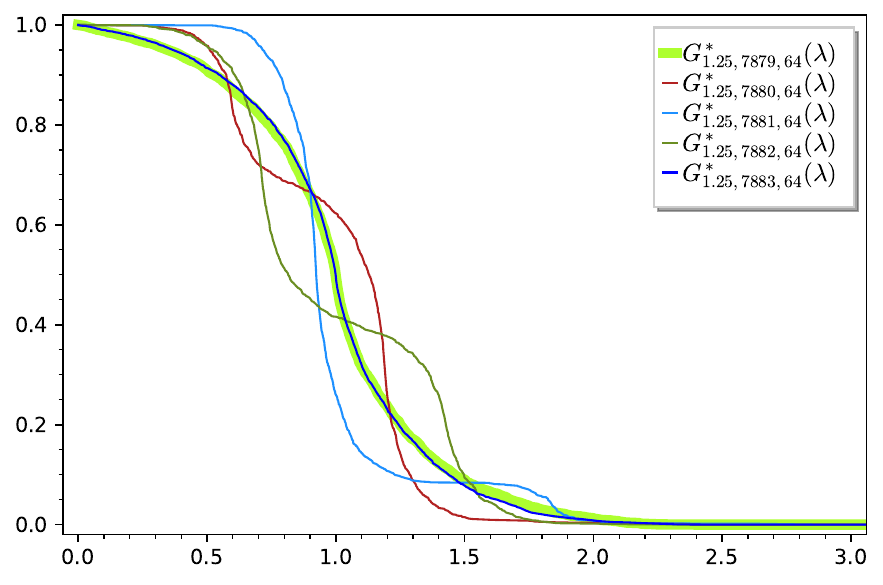}
\hfill\mbox{}
\vspace*{-3mm}
\caption{
Graphs of $G_{t,q,h}(\lambda)$ computed numerically
and thus referred to as $G^*_{t,q,h}(\lambda)$.
Five graphs are shown in each image, for $q=7879, 7880,7881,7882$ and $7883$. 
Notice that $q=7879$ and $7883$ are both prime 
and the difference between the corresponding graphs is indiscernible.
In the image on the left, $h=2$ and $t=1.5$, while in the one on the right, $h=64$ and $t=1.25$.}
\label{F:GLambdaNumerical}
\end{figure}

In the current paper, we will focus on the family $\cP$ of prime numbers and $q=p\in \cP$, $p> \lvert h\rvert >0$. In this situation $N(p,h)=p-2=2J-1$. 
Employing the
Taylor formula for $\tan x$ and $\arctan x$ we see that
$\alpha_{\operatorname{max}} =1/(tJ) +O_t (1/J^2)=
-\alpha_{\operatorname{min}}$, and so
\begin{equation*}
    \Delta_{av} = 
    \frac{\alpha_{\operatorname{max}}-\alpha_{\operatorname{min}}}{\AJh-1}=\frac{1}{tJ^2} 
+ O_t (J^{-3}).
\end{equation*}
The elements of the sequence of finite sets 
$\{ \alpha_j\}_{j=1}^{\AJh}$ are uniformly distributed as 
$p\rightarrow \infty$, in the sense that
\begin{equation*}
\lim\limits_{\substack{p\rightarrow\infty \\ p\in \cP}}
\frac{1}{p-2} \# \left\{ j: 1\leq j\leq p-2, \ 
\alpha \leq \frac{\alpha_j -\alpha_{\operatorname{min}}}{\alpha_{\operatorname{max}}-\alpha_{\operatorname{min}}} 
\leq \beta\right\} 
=\beta -\alpha,\quad \text{for } 0\leq\alpha 
\leq \beta \leq 1.
\end{equation*}
We will show that the limiting gap distribution exists for this family of angles. We will also see that in this case the limiting function does not depend on $h$, as shown by the following result, and so we will denote it simply by $G_{t,\cP}(\lambda)$.

\begin{theorem}\label{Thm1.1}
Let $h\neq 0$ and $D>0$ be fixed integers. Consider the set of points 
\begin{equation*}
\Omega (t,\lambda):=\bigg\{(x, y_{-D+1}, \dots, y_{D})\in\bigg[
-\frac{1}{2},\frac{1}{2}\bigg]^{2D+1}:
        \; x\geq 0, \ %\\
        y_j \not\in \left[y_0 + (j-\lambda)\frac{t}{4x}, y_0 + j\frac{t}{4x}\right] \text{ for }\ j\neq 0 \bigg\}.
    \end{equation*} 
 Then, for any $t\in (1/D, 1/(D-1)]$ and
    $\lambda >0$, as $p\in \cP$, $p\rightarrow \infty$, we have 
\begin{equation*}
  \begin{split}
     G_{t,p,h}(\lambda) &= 2\mu\big(\Omega(t,\lambda)\big) + O_{\lambda, t}\left(p^{-1/(4D+4)}
     \log^{(2D+1)/(2D+2)}p\right),
  \end{split}
\end{equation*}
and
\begin{equation*}
      G_{t,\cP}(\lambda) = 2\mu\big(\Omega(t,\lambda)\big),
\end{equation*}
where $\mu$ is the Lebesgue measure in ${\mathbb R}^{2D+1}$.
\end{theorem}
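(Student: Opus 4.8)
The plan is to convert the angular gap statistic into a lattice count governed by the joint distribution of several consecutive inverses, and then to feed that count into the equidistribution estimates of Section~\ref{SectionDistributionRationalFunctions}. Since the observer lies at distance $\asymp tJ^2$ while $|x_0|,|y_0|\le J$, expanding the arctangent and the denominator gives, for $(x_0,y_0)\in\cA(p,h)$,
\[
\arctan\frac{y_0}{x_0+tJ^2}=\frac{y_0}{tJ^2}-\frac{x_0y_0}{t^2J^4}+O_t\big(J^{-3}\big).
\]
Every residue occurs exactly once as an ordinate (with two exceptions), so I would index the points by their $y$-coordinate $b$, writing the point of ordinate $b$ as $(x(b),b)$ with $x(b)=\big((b^{-1}-h)^{-1}\big)$ reduced into $[-J,J]$. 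Writing $Y_0=b_0/p$, $X_j=x(b_0+j)/p$ and dividing by $\Delta_{av}=1/(tJ^2)+O_t(J^{-3})$, the leading behaviour of the angle is the integer $b_0$, with
\[
\delta_j:=\frac{\alpha(b_0+j)-\alpha(b_0)}{\Delta_{av}}=j-\frac{4Y_0\,(X_j-X_0)}{t}+o(1).
\]
The hypothesis $t\in(1/D,1/(D-1)]$ bounds this reordering correction and thereby confines the neighbours that can interleave in angle with the central point to the $2D$ ordinates $b_0+j$, $-D+1\le j\le D$.

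Discarding the $O(1)$ extremal points near $\alpha_{\max},\alpha_{\min}$, the gap immediately above the point of ordinate $b_0$ is at least $\lambda\Delta_{av}$ precisely when no offset $j\neq0$ satisfies $0<\delta_j<\lambda$; by the identity above this is exactly the requirement that $(Y_0,X_{-D+1},\dots,X_D)$ avoid, for every $j\neq0$, the slab $X_j\in\big[X_0+(j-\lambda)\tfrac{t}{4Y_0},\,X_0+j\tfrac{t}{4Y_0}\big]$. Folding $Y_0<0$ onto $Y_0\ge0$ through the reflection $y\mapsto-y$ (which halves the count and produces the factor $2$), this is membership of $(Y_0,X_\bullet)$ in $\Omega(t,\lambda)$ under the dictionary $x\leftrightarrow Y_0$, $y_0\leftrightarrow X_0$, $y_j\leftrightarrow X_j$. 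Hence
\[
G_{t,p,h}(\lambda)=\frac{2}{N(p,h)-1}\#\big\{b_0:(Y_0,X_{-D+1},\dots,X_D)\in\Omega(t,\lambda)\big\}+O\big(1/p\big).
\]

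It then remains to show that the vector of consecutive inverses becomes equidistributed in $[-\tfrac12,\tfrac12]^{2D+1}$. Testing against additive characters reduces each Fourier coefficient to an incomplete sum $\sum_{b_0}e_p\big(cb_0+\sum_j a_j((b_0+j)^{-1}-h)^{-1}\big)$ of a rational function over an interval; the Weil--Kloosterman-type bounds of Section~\ref{SectionDistributionRationalFunctions} supply square-root cancellation together with a power of $\log p$ coming from completion. Sandwiching the indicator of $\Omega(t,\lambda)$ between smooth functions whose frequencies are truncated at height $H$ yields an error $\ll H^{-1}+H^{2D+1}p^{-1/2}\log^{2D+1}p$; optimizing at $H\asymp\big(p^{1/2}\log^{-(2D+1)}p\big)^{1/(2D+2)}$ produces the main term $2\mu\big(\Omega(t,\lambda)\big)$ with the stated error $O_{\lambda,t}\big(p^{-1/(4D+4)}\log^{(2D+1)/(2D+2)}p\big)$, and letting $p\to\infty$ gives $G_{t,\cP}(\lambda)=2\mu\big(\Omega(t,\lambda)\big)$.

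The main obstacle is this last equidistribution with a uniform, explicit error in all $2D+1$ variables simultaneously: one must bound the incomplete sums of the rational functions $((b_0+j)^{-1}-h)^{-1}$ uniformly in the frequencies, track the completion step responsible for the exact $\log$-powers, and verify that the degrees and poles stay bounded so that the implied constants depend only on $\lambda$ and $t$ (through $D$). The remaining points---the two excluded ordinates $b=0,h^{-1}$, the $O(1)$ extremal points, the non-uniformity in $\Delta_{av}$, and the fact that $\partial\Omega(t,\lambda)$ is Lebesgue-null so that the smoothing cost is genuinely $O(H^{-1})$---are routine once the exponential-sum bound is in hand.
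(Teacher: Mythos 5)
Your proposal is correct and follows essentially the same route as the paper: you index the points by their ordinate with the same rational function $b\mapsto(b^{-1}-h)^{-1}$, linearize the angles to reduce the gap condition to avoidance of the slabs defining $\Omega(t,\lambda)$ for the $2D$ neighboring ordinates (the same interference truncation the paper uses), fold the lower half by reflection to get the factor $2$, and invoke Weil-bound equidistribution of the tuple of consecutive values, arriving at the same error exponent $p^{-1/(4D+4)}\log^{(2D+1)/(2D+2)}p$. The only cosmetic difference is that you pass from box-counting to the domain $\Omega(t,\lambda)$ by Fourier smoothing truncated at height $H$, whereas the paper does this with a cube decomposition of side $1/L$ (Corollary~\ref{C:RationalFunctionsInDomain}); the two devices are equivalent here and optimize to the identical bound.
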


For $t\ge 1$, we computed explicitly the expression of $G_{t,\cP}(\lambda)$ 
as a sum of compositions of elementary functions. 
% For instance, we have
For $t \ge 2$, this is given in the next corollary, 
while for $1\le t < 2$ in Remark~\ref{RemarkA} from the Appendix.
It turns out that the domain of pairs $(t,\lambda)\in [1,\infty)\times [0,\infty)$
is split in polygonal curved regions (see~Figure~\ref{FigureDomains}) 
on which the expression of $G_{t,\cP}(\lambda)$  remains unchanged. 
Moreover, the expressions show an incipient tendency of flow 
and spread into certain nearby regions. 
It is likely that the phenomenon may act similarly for $t<1$.
Overall, the formulas fit together harmoniously so that $\lambda\mapsto G_{t,\cP}(\lambda)$ is $C^1$ for 
$\lambda\ge 0$, 
except for the spike at $\lambda=1$.

%%%%%%%%%%%%%%%%%%%%%%%%%%%%%%%%%%%%%%%%
\begin{corollary}\label{CorollaryG1}
    When $t\ge 2$,
    \begin{equation*}
        G_{t,\cP}(\lambda) 
        = \begin{cases}
            1 & \text{if } \lambda\leq 1-\frac{2}{t}; \\[1mm]
            \frac{1}{8}\left(4 + t^2 -2\lambda t^2 + \lambda^2 t^2 + 4t(1-\lambda)\log\left(\frac{2}{t(1-\lambda)}\right)\right) & \text{if } 1-\frac{2}{t}\leq\lambda<1; \\[1mm]
            \frac{1}{8}\left(4 -t^2+2\lambda t^2 -\lambda^2t^2 - 4t(\lambda-1)\log\left(\frac{2}{t(\lambda-1)}\right)\right) & \text{if } 1\leq \lambda<1+\frac{2}{t}; \\[1mm]
            0 &\text{if }  1+\frac{2}{t}\le \lambda.
        \end{cases} 
    \end{equation*} 
    \end{corollary}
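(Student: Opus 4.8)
The plan is to evaluate the volume $\mu\big(\Omega(t,\lambda)\big)$ appearing in Theorem~\ref{Thm1.1} directly, since for $t\ge 2$ we lie in the range $(1/D,1/(D-1)]$ with $D=1$, so that $\Omega(t,\lambda)\subseteq[-\tfrac12,\tfrac12]^3$ carries coordinates $(x,y_0,y_1)$ and is cut out by the single requirement $x\ge 0$ together with $y_1\notin\big[y_0+(1-\lambda)\tfrac{t}{4x},\,y_0+\tfrac{t}{4x}\big]$. First I would integrate out $y_1$ for fixed $(x,y_0)$: since the forbidden window depends on $y_0$ and $y_1$ only through the difference $y_1-y_0$, the $(y_0,y_1)$-slice at fixed $x$ contributes $1-A(L)$, where $L:=\tfrac{t}{4x}$ and $A(L)$ is the area of $\{(y_0,y_1)\in[-\tfrac12,\tfrac12]^2: y_1-y_0\in[(1-\lambda)L,\,L]\}$. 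Writing $u=y_1-y_0$ and recalling that the difference of two independent uniforms on $[-\tfrac12,\tfrac12]$ has the triangular (``tent'') density $g(u)=\max(0,1-|u|)$ supported on $[-1,1]$, this gives the clean expression $A(L)=\int_{(1-\lambda)L}^{L}g(u)\,\dv u$.

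Next I would assemble the full integral and change variables from $x$ to $L$. Using $\mu(\Omega)=\int_0^{1/2}\big(1-A(L)\big)\,\dv x=\tfrac12-\int_0^{1/2}A(L)\,\dv x$ together with the substitution $x=t/(4L)$, $\dv x=-\tfrac{t}{4L^2}\,\dv L$ (so that $x\in[0,\tfrac12]$ corresponds to $L\in[t/2,\infty)$), Theorem~\ref{Thm1.1} yields
\begin{equation*}
G_{t,\cP}(\lambda)=2\mu\big(\Omega(t,\lambda)\big)=1-\frac{t}{2}\int_{t/2}^{\infty}\frac{A(L)}{L^2}\,\dv L.
\end{equation*}
The decisive simplification for $t\ge 2$ is that the lower limit satisfies $L\ge t/2\ge 1$, so the upper endpoint $L$ of the window always lies at or beyond the right edge of the support of $g$; hence $A(L)=\int_{(1-\lambda)L}^{1}g(u)\,\dv u$ whenever $(1-\lambda)L<1$, and the only free endpoint that matters is $(1-\lambda)L$.

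The computation then reduces to tracking where the moving endpoint $(1-\lambda)L$ crosses $+1$ (when $\lambda<1$) or $-1$ (when $\lambda>1$) as $L$ runs over $[t/2,\infty)$. The crossing occurs at $L=1/\lvert 1-\lambda\rvert$, and comparing this to the lower limit $t/2$ produces exactly the three thresholds $\lambda=1-\tfrac2t$, $\lambda=1$, $\lambda=1+\tfrac2t$ listed in the statement. In each regime $A(L)$ is either $0$, the half-tent $\tfrac12\big(1-(1-\lambda)L\big)^2$, the complementary quantity $\tfrac12+(\lambda-1)L-\tfrac12(\lambda-1)^2L^2$, or the full mass $1$; substituting these into $\int A(L)/L^2\,\dv L$ leaves elementary integrals of $1/L^2$, $1/L$ and constants, whose primitives produce the quadratic-plus-logarithm shape of the four cases. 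I would finish by simplifying the logarithms via $-\log\big(\tfrac{t\lvert 1-\lambda\rvert}{2}\big)=\log\big(\tfrac{2}{t\lvert 1-\lambda\rvert}\big)$ to match the stated closed forms.

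I do not expect a serious obstacle: once the reduction to the tent density and the substitution $L=t/(4x)$ are in place, everything is bookkeeping of cases and elementary antiderivatives. The one point demanding care is the boundary analysis—correctly determining, for each $\lambda$, which portion of $[t/2,\infty)$ falls into each regime, and confirming that the piecewise formulas agree at $\lambda=1-\tfrac2t$ and $\lambda=1+\tfrac2t$ (continuity) and exhibit the expected $C^1$ behaviour away from the spike at $\lambda=1$, which serves as a useful consistency check on the algebra.
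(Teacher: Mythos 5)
Your proposal is correct and is essentially the paper's own route: the paper derives Corollary~\ref{CorollaryG1} by exactly this "straightforward calculation of $\mu(\Omega)$" from Theorem~\ref{Thm1.1} with $D=1$, without even spelling out the details you supply (reduction to the tent density of $y_1-y_0$, the substitution $L=t/(4x)$, and the case analysis at $L=1/\lvert 1-\lambda\rvert$). Your four regimes and the resulting antiderivatives reproduce the stated piecewise formulas, including the continuity checks at $\lambda=1\pm\frac{2}{t}$.
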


Figure~\ref{F:GtPLambda276} illustrates the graph of the limit $G_{t,\cP}(\lambda)$ obtained in Corollary~\ref{CorollaryG1} 
for various values of~$t>2$.
\begin{figure}[htb]
\centering
\hfill
\includegraphics[width=0.48\textwidth]{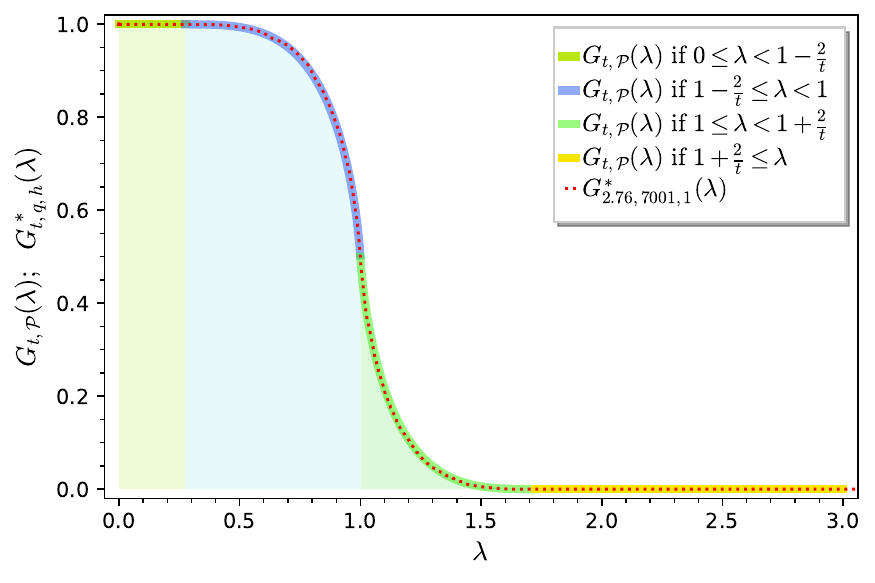}
\hfill
\includegraphics[width=0.48\textwidth]{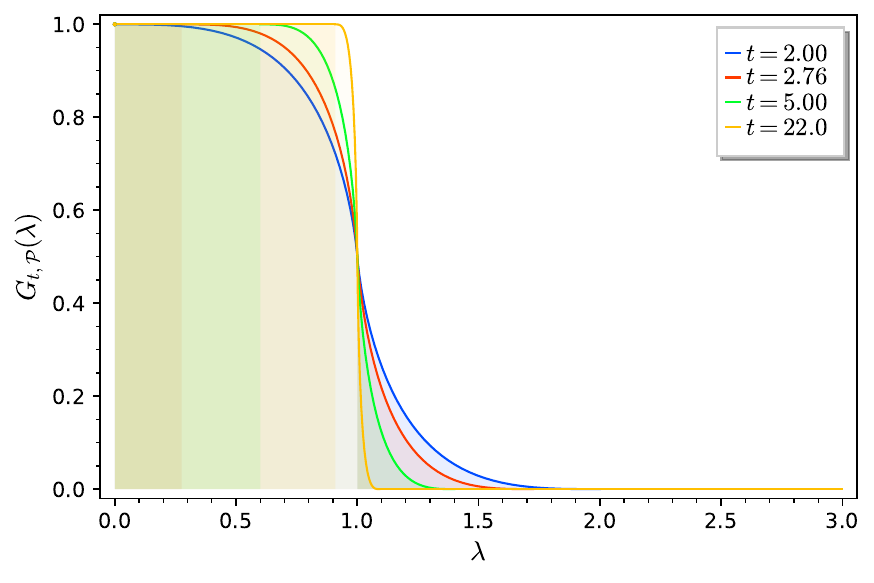}
\hfill\mbox{}
\caption{The graph of the limit $G_{t,\cP}(\lambda)$ for $t\ge 2$.
In the image on the left-hand side, the graph of $G^*_{t,q,h} (\lambda)$, calculated numerically, 
is overlaid with dots. Here $t = 2.76, p=10\,007$ and $h=1$.
In the image on the right-hand side, the graph of $G_{t,\cP}(\lambda)$ is 
shown for four different values of $t$, that is, $t=2,2.76,5$ and $22$.
The distinct expressions that $G_{t,\cP}(\lambda)$ 
takes for $\lambda \ge 0$ are indicated 
by different shades of the corresponding subgraph regions.}
\label{F:GtPLambda276}
\end{figure}

We can also define a density function
$ g_{t,\cP,h}(\lambda)$ as follows.
Consider the set of the gaps of size about $\lambda\ge 0$ times the average, which is given by
\begin{equation*}
    \cU(\delta,t,\cP,h):=
    \Big\{j\in\{1, \dots, \AJh-1\} : 
    \frac{\alpha_{j+1}-\alpha_j}{\Delta_{av}} 
    \in [\lambda-\delta, \lambda+\delta)\Big\}\,.
\end{equation*}
(Recall that the angles $\alpha_j$ depend on the position of the observer at $P_{t,J_p}=(-tJ_p^2,0)$, so that $\cU(\delta,t,\cP,h)$
implicitly depends on $t$.)
Then the density is
\begin{equation*}
    g_{t,\cP,h}(\lambda)
    :=\lim_{\delta\to0}\frac{1}{2\delta}
    \lim_{\substack{p\to\infty \\ p\in \cP}} 
    \frac{1}{\AJh-1}
    \#\cU(\delta,t,\cP,h).
\end{equation*}
 
%%%%%%%%%%%%%%%%%%%%%%%%%%%%%%%%%%%%%%
\begin{figure}[hb]
\centering
\hfill
\includegraphics[width=0.48\textwidth]{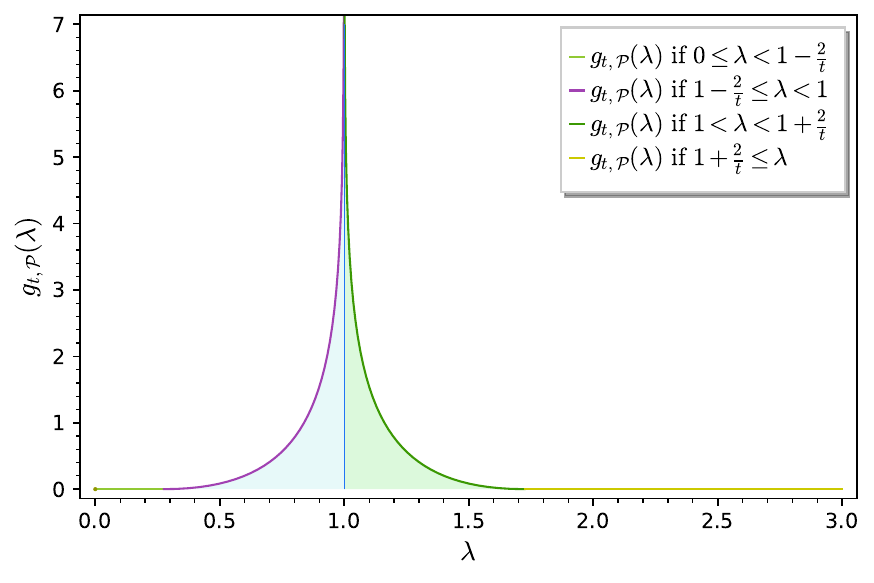}
\hfill
\includegraphics[width=0.48\textwidth]{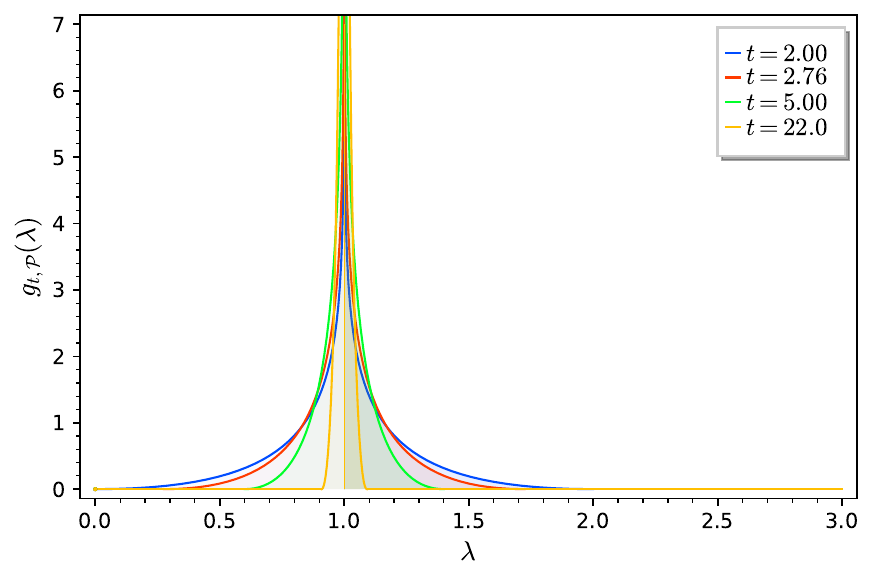}
\hfill\mbox{}
\caption{The graph of the limit density $g_{t,\cP}(\lambda)$ for $t\ge 2$.
In the image on the left-hand side, $t = 2.76$,
whereas in the image on the right-hand side, the graph of $g_{t,\cP}(\lambda)$ is 
is shown for $t=2,2.76,5$ and $22$.}
\label{F:DerivgtPLambda276}
\end{figure}

We notice that the inner limit is being taken of $G_{t,p,h}(\lambda-\delta)-G_{t,p,h}(\lambda+\delta)$, and so this expression becomes
\begin{equation*}
    g_{t,\cP}(\lambda) = \lim_{\delta\to0}\frac{1}{2\delta}
    \big(G_{t,\cP}(\lambda-\delta) - G_{t,\cP}(\lambda+\delta)\big),
\end{equation*}
where we have dropped the subscript $h$ for the density function
since the right hand side is now independent of~$h$. 
If the partial derivative of $G_{t,\cP}(\lambda)$ with respect to $\lambda$ exists, then
\begin{equation*}
    g_{t,\cP}(\lambda) = -\frac{\partial}{\partial\lambda}G_{t,\cP}(\lambda).
\end{equation*}
If the partial derivative doesn't exist at $\lambda_0$, but exists on some interval centered at $\lambda_0$, then one can see that
\begin{equation*}
    g_{t,\cP}(\lambda_0) = -\frac{1}{2}\Big(\lim_{\lambda\to\lambda_0^-}G_{t,\cP}(\lambda) + \lim_{\lambda\to\lambda_0^+}G_{t,\cP}(\lambda)\Big).
\end{equation*}
The graph of $g_{t,\cP}(\lambda)$ for a few values of $t\ge 2$ is shown in 
Figure~\ref{F:DerivgtPLambda276}. 

\section{Distribution of Rational Functions Modulo a Prime}\label{SectionDistributionRationalFunctions}

The key technical tool in this paper is the Riemann Hypothesis for curves over a finite field 
(see \cite{Bomb1966,Perel1963,Weil1948}) in the following form.
\begin{lemma}[\protect{\cite[Lemma 2.1]{Zah2003}}]\label{L:WeilBound}
    Let $p$ be a prime number and $\F_p$ the field with $p$ elements. Let $\psi$ be a non-trivial character of the additive group of $\F_p$ and let $R(x)$ be a non-constant rational function. Then
    \begin{equation*}
        \sum_{x\in\F_p} \psi\big(R(x)\big) \ll \sqrt{p},
    \end{equation*}
    where the sum excludes poles of $R(x)$, and the implied constant depends on the degrees of the numerator and the denominator of $R(x)$.
\end{lemma}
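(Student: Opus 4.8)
The statement is the Weil bound for the complete exponential sum of a rational function, and I would deduce it from the Riemann Hypothesis for curves over $\F_p$ (the cited input) via the $L$-function of an Artin--Schreier covering. First I would normalize: every non-trivial additive character has the form $\psi(t)=e^{2\pi i a t/p}$ with $a\in\F_p^\times$, and replacing $R$ by $aR$ leaves the degrees of the numerator and denominator unchanged, so I may assume $\psi(t)=e^{2\pi i t/p}$. I would then introduce the affine curve $\mathcal{C}\colon y^p-y=R(x)$ over $\F_p$, regarded as a cover of the $x$-line $\mathbb{P}^1$. Since $c^p=c$ for $c\in\F_p$, the additive group $\F_p$ acts on $\mathcal{C}$ by $y\mapsto y+c$, so $\mathcal{C}\to\mathbb{P}^1_x$ is Galois with group $\F_p$, and its characters are exactly the additive characters of $\F_p$.

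The plan is to read the sum off the zeta function of the smooth projective model of $\mathcal{C}$. For a Galois cover with group $\F_p$ one has the factorization $Z(\mathcal{C},T)=\prod_{\chi}L(\chi,T)$ over the characters $\chi$ of $\F_p$, where the trivial character contributes $Z(\mathbb{P}^1,T)=\bigl((1-T)(1-pT)\bigr)^{-1}$ and each non-trivial $\psi$ contributes the Artin--Schreier $L$-function $L(\psi,T)=\exp\bigl(\sum_{n\ge1}S_n(\psi)T^n/n\bigr)$ with $S_n(\psi)=\sum_{x}\psi\bigl(\mathrm{Tr}_{\F_{p^n}/\F_p}R(x)\bigr)$. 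In particular $S_1(\psi)$ is exactly the sum I must bound. I would invoke the Riemann Hypothesis for curves to write $Z(\mathcal{C},T)=P(T)/\bigl((1-T)(1-pT)\bigr)$ with $P(T)=\prod_j(1-\alpha_jT)$ a polynomial whose reciprocal roots satisfy $|\alpha_j|=\sqrt p$. Matching factors shows each non-trivial $L(\psi,T)=\prod_k(1-\gamma_kT)$ is a polynomial (holomorphy of the Artin--Schreier $L$-function) dividing $P(T)$, so every $\gamma_k$ has $|\gamma_k|=\sqrt p$; taking logarithms gives $S_1(\psi)=-\sum_k\gamma_k$, whence $\bigl|\sum_x\psi(R(x))\bigr|\le \bigl(\deg L(\psi,T)\bigr)\sqrt p$.

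It remains to bound $\deg L(\psi,T)$ by a constant depending only on the degrees of the numerator and denominator of $R$, and this is the step I expect to be the main obstacle. I would compute the genus of $\mathcal{C}$, equivalently the conductor of the sheaf $\mathcal{L}_\psi(R)$, through the ramification of $y^p-y=R(x)$: the cover is unramified away from the poles of $R$ and the point at infinity, and at a pole of order $m$ with $p\nmid m$ the ramification is wild with Swan conductor $m$. Feeding this into the Euler-characteristic/Riemann--Hurwitz formula yields $\deg L(\psi,T)=-2+\sum_{v}(1+\mathrm{Swan}_v)$, a sum over the poles of $R$ and $\infty$ bounded purely in terms of the pole orders, hence in terms of the degrees of numerator and denominator and independent of $p$; combined with the previous paragraph this gives $\sum_x\psi(R(x))\ll\sqrt p$ with implied constant depending only on those degrees. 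As an alternative to the classical zeta bookkeeping, one may argue cohomologically: $\mathcal{L}_\psi(R)$ is a geometrically non-trivial lisse rank-one sheaf on $U=\mathbb{P}^1\setminus\{\text{poles}\}$, so $H^0_c=H^2_c=0$, the Grothendieck--Lefschetz trace formula gives $\sum_x\psi(R(x))=-\mathrm{Tr}(\mathrm{Frob}\mid H^1_c)$, purity forces each Frobenius eigenvalue to have modulus $\sqrt p$, and the Grothendieck--Ogg--Shafarevich formula bounds $\dim H^1_c$ by the same conductor expression. In either formulation the delicate wild-ramification/Swan-conductor computation at the poles is the crux of the argument.
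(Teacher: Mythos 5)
The paper does not prove this lemma at all: it imports it verbatim from \cite[Lemma 2.1]{Zah2003}, presenting it as a form of the Riemann Hypothesis for curves over finite fields \cite{Bomb1966,Perel1963,Weil1948}, and your argument---Artin--Schreier cover $y^p-y=R(x)$, factorization of the zeta function over additive characters, RH for the inverse roots, and Swan-conductor/Riemann--Hurwitz bookkeeping to bound the degree of $L(\psi,T)$---is exactly the standard derivation underlying those cited sources, so it is correct and takes essentially the same route. The one point you assert rather than verify is the geometric irreducibility of the cover (equivalently that $R\neq g^p-g+c$ for any rational $g$ and constant $c$), without which $L(\psi,T)$ need not be a polynomial and the sum can be of size $p$; this holds automatically here because a nonconstant $g$ forces $g^p-g$ to have degree at least $p$, so the degenerate case is excluded once $p$ exceeds the fixed degrees of $R$, and the finitely many remaining primes are absorbed into the degree-dependent implied constant.
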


Next, we need an estimate for the number of points on a multidimensional modular curve. 

%%%%%%%%%%%%%%%%%%%%%%%%%%%%%%%%%%%%%%%%55
\begin{lemma}\label{LemmaNIJJ}
Let $p\ge 2$ be prime, and let $d\ge 1$ be integer.
Let $\cR_p$ be the set of all tuples $\big(r_1(x),\dots,r_d(x)\big)$ of reduced rational functions over $\F_p$ such that:
\begin{enumerate}
    \item For all $j\in\{1,\dots,d\}$, the degree of the numerator of $r_j$ is at most one.
    \item For all $j\in\{1,\dots,d\}$, the degree of the denominator of $r_j$ is equal to one.
    \item Where $a, b, c_1, \dots, c_d \in \F_p$,
        \begin{equation*}
            a + bx + c_1 r_1(x) + \cdots + c_d r_d(x) = 0
        \end{equation*}
        for all $x\in\F_p$ only if $a=b=c_1=\cdots=c_d=0$.
\end{enumerate}
% Let $p\ge 2$ be prime, and let $d\ge 1$ be integer. 
Let $(r_1(x),\dots,r_d(x))\in\cR_p$ be fixed.
Then, where $\cJ$, $\cJ_1$, \dots, $\cJ_d$ are intervals in $\F_p$,
\begin{equation*}%\label{eqNIJJ}
\begin{split}
    \#\big\{ x \in \F_p : %&
    \ x\in\cJ,\ r_k(x)\in \cJ_k, \text{ for $1\le k\le d$}\big\} %\\&
    = \frac{\abs{\cJ}\abs{\cJ_1}\cdots\abs{\cJ_d}}{p^{d}} + O\left(p^{1/2}\log^{d+1} p\right).
\end{split}
\end{equation*}
\end{lemma}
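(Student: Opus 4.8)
The plan is to detect each of the interval conditions by means of the additive characters of $\F_p$, thereby converting the counting problem into a sum of complete exponential sums to which Lemma~\ref{L:WeilBound} applies. Writing $e_p(u):=e^{2\pi i u/p}$ and letting $\mathbf 1_{\cI}$ denote the indicator of an interval $\cI\subseteq\F_p$, one has the exact finite Fourier expansion
\begin{equation*}
\mathbf 1_{\cI}(u) = \sum_{n\in\F_p} \widehat{\mathbf 1_{\cI}}(n)\, e_p(nu), \qquad
\widehat{\mathbf 1_{\cI}}(n) = \frac{1}{p}\sum_{y\in\cI} e_p(-ny).
\end{equation*}
The quantity to be estimated is $\sum_{x}\mathbf 1_{\cJ}(x)\prod_{k=1}^d \mathbf 1_{\cJ_k}(r_k(x))$, the sum running over those $x\in\F_p$ that are not poles of any $r_k$; since each $r_k$ has a denominator of degree one it has a single pole, so discarding or adjoining these at most $d$ points costs only $O(1)$. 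Substituting the Fourier expansions and interchanging the order of summation gives
\begin{equation*}
\sum_{n,m_1,\dots,m_d\in\F_p} \widehat{\mathbf 1_{\cJ}}(n)\prod_{k=1}^d \widehat{\mathbf 1_{\cJ_k}}(m_k)\,
\sum_{x} e_p\!\Big(nx + \textstyle\sum_{k=1}^d m_k r_k(x)\Big).
\end{equation*}

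The main term comes from the single frequency $n=m_1=\dots=m_d=0$, for which the inner character sum equals $p$ and the product of Fourier coefficients is $(\abs{\cJ}/p)\prod_k(\abs{\cJ_k}/p)$; multiplying by $p$ yields exactly $\abs{\cJ}\abs{\cJ_1}\cdots\abs{\cJ_d}/p^{d}$, the claimed main term. It remains to bound the contribution of all frequencies $(n,m_1,\dots,m_d)\neq 0$. For such a frequency I set $R(x)=nx+\sum_{k} m_k r_k(x)$, regarded as a rational function over $\F_p$. Clearing the denominators $\prod_k(\text{den}\,r_k)$, one checks that $R$ is a reduced rational function whose numerator and denominator have degrees bounded solely in terms of $d$ (at most $d+1$ and $d$ respectively). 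Crucially, hypothesis (3) in the definition of $\cR_p$ guarantees that $R$ is \emph{non-constant}: were $R$ identically equal to some constant $\gamma$, the relation $-\gamma + nx + \sum_k m_k r_k(x)=0$ would force $n=m_1=\dots=m_d=0$, contrary to assumption. Hence Lemma~\ref{L:WeilBound} applies and gives $\lvert\sum_x e_p(R(x))\rvert \ll_d \sqrt p$, the implied constant depending only on the (bounded) degrees, i.e.\ on $d$.

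Finally I would control the weights. The standard estimate for the Fourier coefficient of an interval gives $\widehat{\mathbf 1_{\cI}}(0)=\abs{\cI}/p\le 1$ and, for $n\neq0$, $\lvert\widehat{\mathbf 1_{\cI}}(n)\rvert \le (p\,\lvert\sin(\pi n/p)\rvert)^{-1}$, whence $\sum_{n\in\F_p}\lvert\widehat{\mathbf 1_{\cI}}(n)\rvert \ll \log p$ for every interval $\cI$. Bounding the off-diagonal sum by the product of these $\ell^1$-norms of the $d+1$ interval coefficients against the Weil bound yields an error of size
\begin{equation*}
\ll \sqrt p\,\Big(\sum_n \lvert\widehat{\mathbf 1_{\cJ}}(n)\rvert\Big)\prod_{k=1}^d\Big(\sum_{m_k}\lvert\widehat{\mathbf 1_{\cJ_k}}(m_k)\rvert\Big) \ll \sqrt p\,\log^{d+1}p,
\end{equation*}
matching the stated remainder.

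The only genuinely delicate point is the verification that every nonzero frequency produces a non-constant $R$ of $d$-bounded degree, so that Lemma~\ref{L:WeilBound} is applicable with a constant uniform in the frequency; this is precisely the role of the linear-independence hypothesis (3). I would therefore spell out the reduction to reduced-fraction form carefully, noting that for $p$ exceeding the degrees a rational function of bounded degree vanishing at every point of $\F_p$ must be the zero function, which is what legitimizes reading hypothesis (3) as genuine linear independence of $1,x,r_1,\dots,r_d$ and hence the non-constancy of $R$.
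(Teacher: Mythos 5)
Your proposal is correct and follows essentially the same route as the paper: both arguments detect interval membership by additive characters, isolate the zero frequency as the main term (with the $O_d(1)$ correction for the poles), invoke Lemma~\ref{L:WeilBound} on the nonzero-frequency complete sums --- with hypothesis (3) supplying exactly the non-constancy needed, as you note --- and use the $\ll \log p$ bound on the $\ell^1$-norm of each interval's Fourier coefficients to assemble the error $O\left(p^{1/2}\log^{d+1}p\right)$. The only difference is organizational: the paper keeps the condition $x\in\cJ$ as a restriction of summation and separately bounds the resulting incomplete sums by a completion step, then Fourier-expands the $d$ conditions $r_k(x)\in\cJ_k$, whereas you expand all $d+1$ conditions symmetrically in one step; the two bookkeepings yield the same error term.
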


\begin{proof}
 We will define the \emph{complete sum}
 \begin{equation*}
     S(a, b_1, \dots, b_d) := \sum_{x\pmod{p}} e\left(\frac{ax + b_1r_1(x) + \cdots + b_dr_d(x)}{p}\right),
 \end{equation*}
 and the \emph{incomplete sum}
 \begin{equation*}
     S_{\cJ}(a, b_1, \dots, b_d) := \sum_{x\in\cJ} e\left(\frac{ax + b_1r_1(x) + \cdots + b_dr_d(x)}{p}\right),
 \end{equation*}
 for some subinterval $\cJ$ of integers in $[0,p-1]$,
 where, in both the above sums and in the ones that follow, 
 we omit the poles of the rational functions $r_1, \dots, r_d$.

 We start off with the bound on the complete sum coming from Lemma~\ref{L:WeilBound}. 
 Provided that the $b_j$ aren't all~$0$,
\begin{equation*}
    S(a,b_1,\dots,b_d) \ll_d \sqrt{p}\,.
\end{equation*}

We now use this to find a bound on our incomplete sum $S_{\cJ}(a,b_1,\dots,b_d)$. 
With $\chi_{\cJ}(x)$ denoting the characteristic function
of the interval $\cJ$, this equals
\begin{align*}
    &\phantom{=}\ \sum_{x\pmod{p}} e\left(\frac{ax + b_1r_1(x) + \cdots + b_dr_d(x)}{p}\right) \chi_{\cJ}(x) \\
    &= \sum_{x\pmod{p}} e\left(\frac{ax + b_1r_1(x) + \cdots + b_dr_d(x)}{p}\right) \sum_{y\in\cJ}\frac{1}{p}\sum_{t\pmod{p}}e\left(\frac{t(y-x)}{p}\right) \\
    &=\frac{1}{p}\sum_{t\pmod{p}} \sum_{y\in\cJ} e\left(\frac{ty}{p}\right) S(a-t,b_1,\dots,b_d)\,.
\end{align*}

The sum over $y$ is a geometric progression
that can be estimated precisely:
\begin{equation*}
    \sum_{y\in\cJ} e\left(\frac{ty}{p}\right) \ll \min\left\{\abs{\cJ}, \frac{1}{2\norm{t/p}}\right\}.
\end{equation*}

It then follows that, as long as the $b_j$ aren't all 0, we find that the incomplete sum is bounded by
\begin{align*}
 \frac{1}{p}\sum_{t=1}^{p-1}\frac{1}{2\norm{t/p}}\sqrt{p} + \frac{1}{p}\abs{\cJ}\sqrt{p} 
    \ll_d \frac{1}{p}\sum_{t=1}^{(p-1)/2}\frac{p}{t}\sqrt{p} + \frac{1}{p}\abs{\cJ}\sqrt{p} 
    \ll_d \sqrt{p}\log p.
\end{align*}

Now, the set $\{x\in\cJ : r_1(x)\in\cJ_1, \dots, r_d(x)\in\cJ_d\}$ has cardinality
\begin{align*}
    &= \sum_{x\in\cJ} \begin{cases}
        1, & \text{if } r_1(x)\in\cJ_1, \dots, r_d(x)\in\cJ_d; \\
        0, & \text{else};
    \end{cases} \\
    &= \sum_{x\in\cJ} \left[\sum_{y_1\in\cJ_1} \frac{1}{p}\sum_{t_1\pmod{p}}e\left(\frac{t_1\big(y_1-r_1(x)\big)}{p}\right)\right] \cdots \left[\sum_{y_d\in\cJ_d} \frac{1}{p}\sum_{t_d\pmod{p}}e\left(\frac{t_d\big(y_d-r_d(x)\big)}{p}\right)\right] \\
    &= \frac{1}{p^d} \sum_{t_1\pmod{p}}\cdots\sum_{t_d\pmod{p}} \left(\sum_{y_1\in\cJ_1} e\left(\frac{t_1 y_1}{p}\right)\right) \cdots \left(\sum_{y_d\in\cJ_d} e\left(\frac{t_d y_d}{p}\right)\right) S_{\cJ}(0,-t_1,\dots,-t_d)\,.
\end{align*}

In the terms where $t_1=\cdots=t_d=0$, the sums are computed trivially to find our main term
\begin{equation*}
    \frac{\big(\abs{\cJ}-\abs{\cS}\big)\abs{\cJ_1}\cdots\abs{\cJ_d}}{p^d} = \frac{\abs{\cJ}\abs{\cJ_1}\cdots\abs{\cJ_d}}{p^d} + O_d(1),
\end{equation*}
where $\cS$ is the set of poles of the $r_j(x)$ appearing in $\cJ$.

The remaining terms contribute
\begin{equation*}
    \frac{1}{p^d} \sum_{(t_1, \dots, t_d) \neq (0, \dots, 0)} \prod_{j=1}^d \left[\sum_{y_j\in\cJ_j} e\left(\frac{t_j y_j}{p}\right)\right] S_{\cJ}(0,-t_1,\dots,-t_d).
\end{equation*}

As before, the sums over $y_j$ are bounded above by
\begin{equation*}
    O_d\left(\min\left\{\abs{\cJ_j}, \frac{1}{2\norm{t_j/p}}\right\}\right),
\end{equation*}
and our incomplete sum is bounded above by
\begin{equation*}
    O_d\big(\sqrt{p}\log p\big).
\end{equation*}

Consider the terms where $k$, with $0\leq k \leq d-1$, of the $t_j$'s are fixed at zero, and the remaining $t_j$ sum over the non-zero terms. There are $\binom{d}{k}$ choices of $t_j$ to fix at zero, and the contribution from each choice is bounded above by
\begin{align*}
    \sqrt{p}\log p \frac{1}{p^d} p^{k} \prod_{j=1}^{d-k} \sum_{t_j=1}^{p-1} \frac{1}{2\norm{t_j/p}} &\ll_d \sqrt{p}\log p \frac{1}{p^{d-k}} \prod_{j=1}^{d-k}
 \sum_{t_j=1}^{(p-1)/2} \frac{p}{t_j} \\
    &\ll_d \sqrt{p}\log p \prod_{j=1}^{d-k}\sum_{t_j=1}^{(p-1)/2}\frac{1}{t_j} \\
    &\ll_d \sqrt{p}\log^{d-k+1}p.
\end{align*}

Summing over all of these contributions, we arrive at our error term
\begin{equation*}
    \ll_d \sqrt{p}\log^{d+1} p,  %\qed
\end{equation*}
which completes the proof of the lemma.
\end{proof}

We need the following result, which will only be applied to the domain $\Omega(t,\lambda)$ given in Theorem~\ref{Thm1.1}.

\begin{corollary}\label{C:RationalFunctionsInDomain}
    Let $\Omega$ be a 
    domain inside the hypercube $[-1/2, 1/2]^{d+1}$ with a $C^1$ boundary and let $(r_1(x), \dots, r_d(x))$ be a tuple which satisfies the conditions of Lemma~\ref{LemmaNIJJ}. Consider the function 
\begin{equation*}
  \br: \F_p \to [-1/2, 1/2]^{d+1},\quad \br(x):=\big([x]/p, [r_1(x)]/p, \dots, [r_d(x)]/p\big),    
\end{equation*}
where $[x]$ is the representative of $x$ in $[-(p-1)/2, (p-1)/2]$. Let $N(\br, \Omega)$ count the number of $x\in\F_p$ such that $\br(x)\in\Omega$. Then,
    \begin{equation*}
        N(\br, \Omega) = p\mu(\Omega) + O_{d,\Omega}\left(p^{(2d+3)/(2d+4)}\log^{(d+1)/(d+2)}p\right),
    \end{equation*}
    where $\mu$ is the Lebesgue measure in $\R^{d+1}$.
\end{corollary}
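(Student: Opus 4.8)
The plan is to reduce the count over the $C^1$ domain $\Omega$ to a sum of counts over small axis-parallel boxes, each of which is handled by Lemma~\ref{LemmaNIJJ}. First I would fix a scale parameter $M\in\N$ (to be optimized at the end) and partition the hypercube $[-1/2,1/2]^{d+1}$ into $M^{d+1}$ congruent closed boxes of side length $1/M$. Each such box $B$ is a product $I_0\times I_1\times\cdots\times I_d$ of intervals of length $1/M$; multiplying by $p$ turns $I_0$ into an interval $\cJ\subseteq\F_p$ in the $x$-variable and each $I_k$ into an interval $\cJ_k\subseteq\F_p$ in the $r_k$-variable, all of length $p/M+O(1)$, and these are genuine intervals mod $p$ under the representative map into $[-(p-1)/2,(p-1)/2]$. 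Since $(r_1,\dots,r_d)$ satisfies the hypotheses of Lemma~\ref{LemmaNIJJ}, the number of $x\in\F_p$ with $\br(x)\in B$ equals $\abs{\cJ}\abs{\cJ_1}\cdots\abs{\cJ_d}/p^d+O(p^{1/2}\log^{d+1}p)=p\mu(B)+O(p^{1/2}\log^{d+1}p)$, where the $O(1)$ rounding in the interval lengths only perturbs the main term by $O(M^{-d})$ per box, hence by $O(M)$ in total, which will be negligible.

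Next I would classify the boxes into interior boxes $B\subseteq\Omega$, exterior boxes $B\cap\Omega=\emptyset$, and boundary boxes meeting $\partial\Omega$. Every $x$ with $\br(x)\in\Omega$ lies in an interior or a boundary box, while every $x$ counted by an interior box lies in $\Omega$, so
\[
\sum_{B\subseteq\Omega}\#\{x:\br(x)\in B\}\ \le\ N(\br,\Omega)\ \le\ \sum_{B\subseteq\Omega}\#\{x:\br(x)\in B\}+\sum_{B\cap\partial\Omega\neq\emptyset}\#\{x:\br(x)\in B\}.
\]
Summing the per-box estimate over the at most $M^{d+1}$ interior boxes gives $p\sum_{B\subseteq\Omega}\mu(B)+O(M^{d+1}p^{1/2}\log^{d+1}p)$. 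The decisive geometric input is that, because $\partial\Omega$ is a compact $C^1$ hypersurface of dimension $d$, the number of boxes it meets is $O_{d,\Omega}(M^d)$: covering $\partial\Omega$ by finitely many charts on which it is, after a rotation, a Lipschitz subgraph, one sees that over each of the $O(M^d)$ $d$-dimensional mesh columns the surface spans a range $O(1/M)$ in the remaining coordinate, hence meets $O(1)$ boxes there. This yields $\sum_{B\subseteq\Omega}\mu(B)=\mu(\Omega)+O(M^d\cdot M^{-(d+1)})=\mu(\Omega)+O(1/M)$, and the total contribution of the boundary boxes to the count is at most $O(M^d)\cdot\big(p/M^{d+1}+p^{1/2}\log^{d+1}p\big)=O(p/M)+O(M^d p^{1/2}\log^{d+1}p)$.

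Collecting everything yields $N(\br,\Omega)=p\mu(\Omega)+O_{d,\Omega}\big(p/M+M^{d+1}p^{1/2}\log^{d+1}p\big)$, and the final step is to balance the two error terms. Choosing $M=\floor{p^{1/(2(d+2))}\log^{-(d+1)/(d+2)}p}$ makes both equal to $p^{(2d+3)/(2d+4)}\log^{(d+1)/(d+2)}p$, which is exactly the claimed bound. The hard part will be the bookkeeping at the boundary, namely turning the heuristic ``$O(M^d)$ boundary boxes'' into a rigorous consequence of the $C^1$ hypothesis via the chart/subgraph argument above; once that count is in hand, the application of Lemma~\ref{LemmaNIJJ} box by box and the optimization of $M$ are routine.
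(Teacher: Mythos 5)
Your proposal is correct and follows essentially the same route as the paper: a decomposition of $[-1/2,1/2]^{d+1}$ into cubes of side $1/L$, an application of Lemma~\ref{LemmaNIJJ} cube by cube, a sandwich between inner and outer unions of cubes, and the same optimization $L\approx p^{1/(2d+4)}\log^{-(d+1)/(d+2)}p$. The only (immaterial) difference is in justifying the boundary contribution: you count the $O_{d,\Omega}(L^d)$ boxes meeting $\partial\Omega$ directly via a chart/Lipschitz-subgraph argument, while the paper bounds the measure of a tubular neighborhood of $\partial\Omega$ of radius $\sqrt{d+1}/L$ by $O_{d,\Omega}(1/L)$ citing Weyl's tube formula --- the two estimates are equivalent since each box has measure $L^{-(d+1)}$.
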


\begin{proof}
    We split up the cube $[-1/2, 1/2]^{d+1}$ into smaller cubes of side length $1/L$, where $L$ is a positive integer. By Lemma~\ref{LemmaNIJJ}, for any such cube $\cJ$, the number of $x\in\F_p$ such that $\br(x) \in \cJ$ is
    \begin{equation}\label{E:LemmaNIJJResult}
        \frac{p}{L^{d+1}} + O_d\left(p^{1/2} \log^{d+1}p\right).
    \end{equation}

    Let $D(L)$ be the union of such cubes which are contained completely in $\Omega$, and let $E(L)$ be the union of such cubes which have a non-empty intersection with $\Omega$. Therefore,
    \begin{equation*}
        D(L) \subseteq \Omega \subseteq E(L).
    \end{equation*}

    For any point inside $E(L)\backslash D(L)$, there will be a point on $\partial\Omega$ at a distance at most $\sqrt{d+1}/L$ away. Thus, we can say that $E(L)\backslash D(L)$ is contained in the set of all points of distance at most $\sqrt{d+1}/L$ from~$\partial\Omega$. This set has measure $O_{d,\Omega}(1/L)$ (see \cite{Weyl1939}), and consequently 
$\mu(E(L)\backslash D(L))$ is also $O_{d,\Omega}(1/L)$.

    From this, we see that
    \begin{equation*}
        \mu(D(L)) = \mu(\Omega) - \mu\big(\Omega\backslash D(L)\big) = \mu(\Omega) + O_{d,\Omega}(1/L),
    \end{equation*}
    and similarly
    \begin{equation*}
        \mu\big(E(L)\big) = \mu(\Omega) + O_{d,\Omega}(1/L).
    \end{equation*}
    Hence, $D(L)$ and $E(L)$ are both unions of $L^{d+1}\mu(\Omega) + O_{d,\Omega}(L^{d})$ cubes of side length $1/L$. From this and \eqref{E:LemmaNIJJResult}, we find
    \begin{align*}
        \# \{x\in\F_p : \br(x)\in D(L)\}\big| &= \bigl(L^{d+1}\mu(\Omega) + O_{d,\Omega}(L^{d})\bigr)\bigl(p/L^{d+1}+O_d(p^{1/2}\log^{d+1}p)\bigr) \\
        &= p\mu(\Omega) + O_{d,\Omega}\big(L^{d+1}p^{1/2}\log^{d+1}p + p/L\big)\,.
    \end{align*}
    Likewise,
    \begin{equation*}
        \# \{x\in\F_p : \br(x)\in E(L)\}\big| 
        = p\mu(\Omega) + O_{d,\Omega}\big(L^{d+1}p^{1/2}\log^{d+1}p + p/L\big)\,,
    \end{equation*}
    therefore
    \begin{equation*}
        N(\br,\Omega) = p\mu(\Omega) + O_{d,\Omega}\big(L^{d+1}p^{1/2}\log^{d+1}p + p/L\big)\,.
    \end{equation*}
    Choosing
    \begin{equation*}
        L = \floor{p^{1/(2d+4)}\log^{-(d+1)/(d+2)}p},
    \end{equation*}
    we obtain our result.
\end{proof}

%%%%%%%%%%%%%%%%%%%%%%%%%%%%%%%%%%%%%%%%%%%%%%%%
\section{Gap Distribution of Angles. Proof of Theorem~\ref{Thm1.1}}\label{SectionGapDistribution}

We now use results from Section~\ref{SectionDistributionRationalFunctions} to study the gap-distribution functions. 
We label the points in $\cA(p,h)$ as $(r(m), m)$, where 
\begin{equation}\label{eqDefr}
    r(m):=m(1-hm)^{-1} \pmod p
\end{equation}
is a rational function modulo $p$. We will look at the contribution from angles only in the top half of the square (laying above the $x$-axis). We will see the the bottom half of the square will give the same contribution to $G_{t,p}(\lambda)$. For any point $(r(m), m)$, we will want to know what is the next point seen by the observer (in particular, on which line does it lie). We may not be able to say with certainty which line it lies on, but for fixed $t$, we can say that the next point seen lies within a bounded number of lines away from $m$.

For any integer $m$ between $0$ and $J$, the line between $(-J,m)$ and $(J, m+D)$ has intersection with the $x$-axis at
\begin{equation*}
    \left(-\frac{2Jm}{D}-J, 0\right).
\end{equation*}

If our observer is to the left of this point, then the observer will see the entire row $y=m$ before it starts seeing the row $y=m+D$. If
\begin{equation*}
    \frac{2}{D} < t \leq \frac{2}{D-1},
\end{equation*}
then once $J$ is sufficiently large, the above holds true for all $m\leq J$. We say that we have \textit{interference} from up to $D-1$ lines away, but no more. From now on, we assume that $t$ lies in this range.

The entire row $y=m+1$ is seen by the observer before any of the row $y=m+1+D$ is seen. Therefore, the next point seen after $(r(m), m)$ cannot lay on the line $y=m+1+D$, or any line above. Moreover, the entire row $m-D$ is seen before any of the line $y=m$, so the next point seen after $(r(m), m)$ cannot lay on the line $y=m-D$, or any line below.

To count whether the angle between $(r(m), m)$ and the next point seen is larger than $\lambda\Delta_{av}$ or not, we start off by drawing two rays: the ray from $(-tJ^2, 0)$ 
to $(r(m), m)$, and the ray from $(-tJ^2, 0)$ which forms an angle $\lambda\Delta_{av}$ in the anti-clockwise direction from the first ray. If there are no other points in the region bounded by the two rays and the rectangle, we can say that the angle from $(r(m), m)$ to the next point seen is large enough to be counted.

We can count only the points satisfying $m>J^\eta$ for some $\eta\in(0,1)$, to be further specified later. (We will see that any $\eta\in[5/8, 3/4]$ will be sufficient for our purpose without worsening the final error term.) This will give us an error of $O(p^{\eta})$ when counting points, but will also allow us to have lower bound on $m$.

The first ray has equation
\begin{equation*}
    y=\frac{m}{r(m)+tJ^2}(x+tJ^2),
\end{equation*}
and so it intersects the line $y=m+j$ at the point
\begin{equation*}
    \left(r(m)+j\frac{tJ^2}{m} + O(J^{1-\eta}), m+j\right).
\end{equation*}

The first ray makes an angle of
\begin{equation*}
    \frac{m}{r(m)+tJ^2} + O(J^{-3})
\end{equation*}
with the $x$-axis. Its slope has a similar expresion since $h\ll J^{-1}$ implies $\tan h=h+O(J^{-3})$ and $\arctan h =h+O(J^{-3})$
It follows that the second ray will make an angle of
\begin{equation*}
    \frac{m}{r(m)+tJ^2} + \frac{\lambda}{tJ^2} + O(J^{-3})
\end{equation*}
with the $x$-axis. It follows that the second ray has equation
\begin{equation*}
    y=\left(\frac{m}{tJ^2+r(m)} + \frac{\lambda}{tJ^2} + O(J^{-3})\right)(x+tJ^2),
\end{equation*}
and so it intersects the line $y=m+j$ at some point of the form
\begin{equation*}
    \bigg( \Big( r(m) + (j-\lambda)\frac{tJ^2}{m}\Big) \big( 1 + O(J^{-\eta})\big), m+j\bigg) =
\bigg( r(m)+(j-\lambda)\frac{tJ^2}{m}+O(J^{2-2\eta}),m+j \bigg) .
\end{equation*}

As a consequence, we want to find $m$ such that
\begin{align}
    J^{\eta} <\ &m < J,\label{E:ineqm} \\
    -J < r(&m) < J,\label{E:ineqrm}
\end{align}
and either
\begin{equation}\label{E:ineqrmj1}
    -J < r(m+j) < r(m) + (j-\lambda)\frac{tJ^2}{m} + O(J^{2-2\eta}),
\end{equation}
or
\begin{equation}\label{E:ineqrmj2}
    r(m) + j\frac{tJ^2}{m} + O(J^{1-\eta}) < r(m+j) < J,
\end{equation}
with \eqref{E:ineqrmj1} and \eqref{E:ineqrmj2} 
holding for all integers $j$ satisfying $-D+1\leq j \leq D$, $j\neq0$.

We now wish to apply Corollary~\ref{C:RationalFunctionsInDomain} to the tuple $(r(m+(-D+1)), \dots, r(m+D))$. The rational function
$m \mapsto r(m+j)$ has a pole at $m$ with $(m+j)h \equiv 1 \pmod{p}$, according to~\eqref{eqDefr},
so as long as $2D<p$, each such rational function will have a different pole. Therefore, no linear combination of the rational functions will give a linear function, and so Corollary~\ref{C:RationalFunctionsInDomain} can be applied, with $d=2D$.

Dividing \eqref{E:ineqm}, \eqref{E:ineqrm}, \eqref{E:ineqrmj1}, and \eqref{E:ineqrmj2} by $p$, and then writing $x=m/p$ and \mbox{$y_j=r(m+j)/p$}, Corollary~\ref{C:RationalFunctionsInDomain} tells us that the number of $m$ satisfying the above inequalities is asymptotically equal to $p\mu(\Omega')$, where $\Omega'\subseteq[-1/2, 1/2]^{2D+1}$ is the set of points $(x, y_{-D+1}, \dots, y_{D})$ satisfying
\begin{align}
    x&\in\left[O(p^{\eta-1}), \frac{1}{2}+O(p^{-1})\right], \label{E:ineqxerr} \\ 
    y_0 &\in \left[-\frac{1}{2}+O(p^{-1}), \frac{1}{2} + O(p^{-1})\right], \label{E:ineqy0err} \\
    y_j &\in \left[-\frac{1}{2}, \frac{1}{2}\right] \setminus \left[y_0+(j-\lambda)\frac{t}{4x} + O(p^{1-2\eta}), y_0 + j\frac{t}{4x} + O(p^{-\eta})\right], \label{E:ineqyjerr}
\end{align}
with \eqref{E:ineqyjerr} holding for all integers $j\neq0$ such that $-D+1\leq j \leq D$.

We show that $\mu(\Omega')$ is very close to $\mu(\Omega)$, where $\Omega\subseteq[-1/2, 1/2]^{2D+1}$ is the set of points $(x, y_{-D+1}, \dots, y_{D})$ satisfying
\begin{align}
    x&\in\left[0, \frac{1}{2}\right], \label{E:ineqx} \\ 
    y_0 &\in \left[-\frac{1}{2}, \frac{1}{2}\right], \label{E:ineqy0} \\
    y_j &\in \left[-\frac{1}{2}, \frac{1}{2}\right] \setminus \left[y_0+(j-\lambda)\frac{t}{4x}, y_0 + j\frac{t}{4x}\right], \label{E:ineqyj}
\end{align}
with \eqref{E:ineqyj} holding for all integers $j\neq0$ such that $-D+1\leq j \leq D$.

Let $\Omega'_{k}$ be the region satisfying firstly \eqref{E:ineqxerr}, as well as \eqref{E:ineqy0err} and \eqref{E:ineqyjerr} for $j<k$, whilst satisfying \eqref{E:ineqy0} and \eqref{E:ineqyj} for $j\geq k$. Note that $\Omega'_{D+1}=\Omega'$. Then, $\mu(\Omega'_{k}) - \mu(\Omega'_{k+1}) = O(p^{1-2\eta})$. Also, $\mu(\Omega) = \mu(\Omega'_{-D+1}) + O(p^{\eta-1})$. Therefore,
\begin{equation*}
    \mu(\Omega') = \mu(\Omega) + O_D(p^{1-2\eta} + p^{\eta-1}).
\end{equation*}

It follows that the number of $m$ satisfying \eqref{E:ineqm}, \eqref{E:ineqrm}, \eqref{E:ineqrmj1}, and \eqref{E:ineqrmj2} is equal to
\begin{equation*}
    p\mu(\Omega) + O\left(p^{2-2\eta} + p^{\eta} 
    + p^{(4D+3)/(4D+4)}\log^{(2D+1)/(2D+2)}p\right),
\end{equation*}
and consequently, dividing by $p$, we obtain the contribution to $G_{t,p}(\lambda)$ from the top half of the square to be
\begin{equation*}
    \mu(\Omega) + O\left(p^{1-2\eta} + p^{\eta-1} 
    + p^{-1/(4D+4)}\log^{(2D+1)/(2D+2)}p\right).
\end{equation*}
Finally, we can take any $\eta\in\big[ 1/2+1/(8D+8), 1-1/(4D+4)\big]$ (say, $\eta=11/16$) and the first two error terms will be absorbed by the last error term.

To deal with the lower half of the square, we notice that if we reflect this rectangle about the $x$-axis, the angles between consecutive points are preserved. Our analysis of this rectangle is identical as before, simply replacing $r(m)$ by $r(-m)$, since the tuple $\big(r(-(m-D+1)), \dots, r(-(m+D)\big)$ also satisfies the conditions of Corollary~\ref{C:RationalFunctionsInDomain}. Therefore, the contribution to $G_{t,p}(\lambda)$ from the lower half of the square is identical to that from the upper half, so we obtain
\begin{equation*}
    G_{t,p}(\lambda) = 2\mu(\Omega) + O\left(
    p^{-1/(4D+4)}\log^{(2D+1)/(2D+2)}p\right).
\end{equation*}
Taking $p\to\infty$, the error term vanishes and we are left with
\begin{equation*}
    G_{t,\cP}(\lambda) = 2\mu(\Omega).
\end{equation*}
This completes the proof of Theorem~\ref{Thm1.1}.
\hfill\qed

\vspace*{10mm}
\section*{Appendix}
% Count subsections A1, A2, ...
\renewcommand{\thesubsection}{\textbf{A\hspace{1pt}\arabic{subsection}}}
\setcounter{subsection}{0}
%%%%%%%%%%%%%%%%%%%%%%%%%%%%%%%%%%%%%%%%%%%%%%%%%%
\subsection{The expressions for the gap distribution and the density functions for \texorpdfstring{$1\le t<2$}{t in [1,2]}}
Direct calculations now lead to the following explicit expressions of $G_{t,\cP}$ and $g_{t,\cP}$ on the specified intervals. 
Consider
% \noindent\parbox{0mm}{
\begin{align*}
H_1(t,\lambda):= & \frac{1}{2}\left(2-\lambda t^2 -2\lambda t\log\left(\frac{2}{t}\right)-t(1-\lambda)\log(1-\lambda) 
+t(1+\lambda)\log(1+\lambda)\right); \hspace{24mm} \\[2mm]
H_2(t,\lambda):= & \frac{1}{8}\left(4+t^2-2\lambda t^2+\lambda^2 t^2 + 4t(1-\lambda)
\log\left(\frac{2}{t(1-\lambda)}\right)\right); \\[2mm]
H_3(t,\lambda):= & \frac{1}{48}\Big(8+12t+6t^2+4t^3-24\lambda t + 12\lambda t^2
-12\lambda t^3 -6\lambda^2t^2+9\lambda^2t^3
  \\
& \phantom{\frac{1}{48}\Big(} -2\lambda^3t^3 +24t(2-\lambda)
\log\left(\frac{2}{t}\right) 
   -24t(1-\lambda)\log(1-\lambda)
           -24t\log(2-\lambda) \Big) ; \\[2mm]
H_4(t,\lambda):= & \frac{1}{48}\Big( 8-12t+6t^2-t^3
+12\lambda t^2-6\lambda^2t^2 
    +24t(2-\lambda)\log\left(\frac{2}{t}\right)
    +24t(\lambda-1)\log(\lambda-1)\Big) ;  \\[2mm]
H_5 (t,\lambda):= &
\frac{1}{576}\Big( 144-80t-144t^2-12t^3+27t^4+8\lambda t+216\lambda t^2+54\lambda t^3
-54\lambda t^4
-72\lambda^2t^2-36\lambda^2t^3  \\
& \phantom{\frac{1}{576}\Big(} +36\lambda^2t^4 
+6\lambda^3t^3-10\lambda^3t^4+\lambda^4t^4 
-48t(5\lambda-8)\log\left(\frac{2}{t}\right)+48t(4-\lambda)\log(3-\lambda) \\ & \phantom{\frac{1}{576}\Big(} 
+288t(\lambda-1)\log(\lambda-1)\Big) ;\\[2mm]
H_6 (t,\lambda) := & \frac{1}{576}\Big(
144-416t-432t^2-108t^3-13t^4+152\lambda t
  +504\lambda t^2+198\lambda t^3+34\lambda t^4 
  -144\lambda^2t^2  \\ & 
  \phantom{\frac{1}{576}\Big(}
  -108\lambda^2t^3  -30\lambda^2t^4  
  +18\lambda^3t^3+10\lambda^3t^4-\lambda^4 t^4 
  + \frac{48t}{\lambda-1}+ 48t(8-5\lambda)
  \log\Big(\frac{2}{t(\lambda-1)}\Big)\Big)  ; \\[2mm]
H_7(t,\lambda) := & \frac{1}{48}\Big(32+12t+4t^3-24\lambda t
-12\lambda t^3-12\lambda^2t^2+9\lambda^2t^3
-2\lambda^3t^3 +24t(1-2\lambda)\log\left(\frac{2}{t}\right) \\
& \phantom{\frac{1}{48}\Big(}
-24t(1-\lambda)\log(1-\lambda)
-24t\log(2-\lambda)+24t(1+\lambda)\log(1+\lambda)\Big).
\end{align*}

%\newpage
\begin{remark}\label{RemarkA}
A straightforward calculation of $\mu(\Omega)$ then leads to the conclusion that:
\begin{multicols}{2}

(i) When $4/3\le t<2$,
\begin{equation*}
G_{t,\cP}(\lambda) = \begin{cases} 
H_1(t,\lambda) & \mbox{\rm if $\lambda \le \frac{2}{t}-1$;}\\
H_2(t,\lambda) & \mbox{\rm $\frac{2}{t}-1\le \lambda<2-\frac{2}{t}$;} \\
H_3(t,\lambda) & \mbox{\rm if $2-\frac{2}{t}\le \lambda<1$;}\\
H_4(t,\lambda) & \mbox{\rm if $1\le \lambda<3-\frac{2}{t}$;}\\
H_5(t,\lambda) & \mbox{\rm if $3-\frac{2}{t}\le \lambda<2$;}\\
H_6 (t,\lambda) & \mbox{\rm if $2\le \lambda<1+\frac{2}{t}$;}\\
0 & \mbox{\rm if $1+\frac{2}{t}\le \lambda$.   }
\end{cases}
\end{equation*}

(ii)    When $1\le t<4/3$, 
\begin{equation*}
G_{t,\cP}(\lambda) = \begin{cases}
H_1(t,\lambda) & 
   \mbox{\rm if $\lambda \leq 2-\frac{2}{t}$;}\\
H_7(t,\lambda) & 
\mbox{\rm if $2-\frac{2}{t}\leq \lambda < \frac{2}{t}-1$;}\\
H_3(t,\lambda) & \mbox{\rm if $\frac{2}{t}-1 \leq \lambda <1$;}\\
H_4(t,\lambda) & 
\mbox{\rm if $1\leq\lambda < 3-\frac{2}{t}$;}\\
H_5(t,\lambda) & 
\mbox{\rm if $3-\frac{2}{t}\leq\lambda <2$;} \\
H_6(t,\lambda) &
\mbox{\rm if $2\leq\lambda < 1+\frac{2}{t}$;} \\
0 & \mbox{\rm if $1+\frac{2}{t} \leq \lambda$.}
\end{cases}
\end{equation*}
\end{multicols}
\end{remark}

For $4/3\le t<2$ and then $1\le t<4/3$, the graphs shown in Figure~\ref{FigureGtPLambda145-112} 
represents two instances of the limit gap distribution function,
emphasizing the regions where the general expression changes.
One sees that these fit exactly with the corresponding $G^*_{t,q,1}(\lambda)$ calculated numerically
for the prime $q=8009$.
%%%%%%%%%%%%%%%%%%%%%%%%%%%%%%%%%%%%%%%%%%%%%%%%%%%
\begin{center}
\begin{figure}[htb]
\centering
\hfill
    \includegraphics[width=0.49\textwidth]{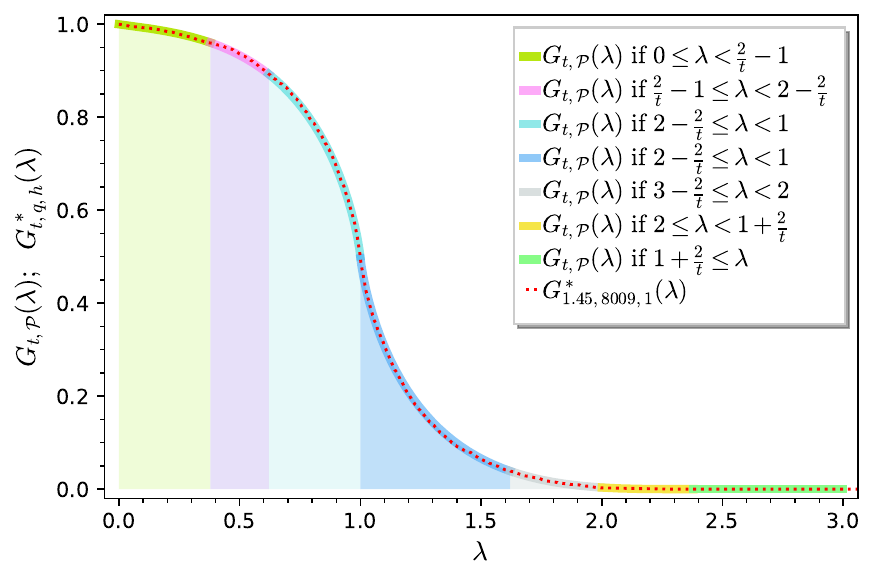}
\hfill\hfill
    \includegraphics[width=0.49\textwidth]{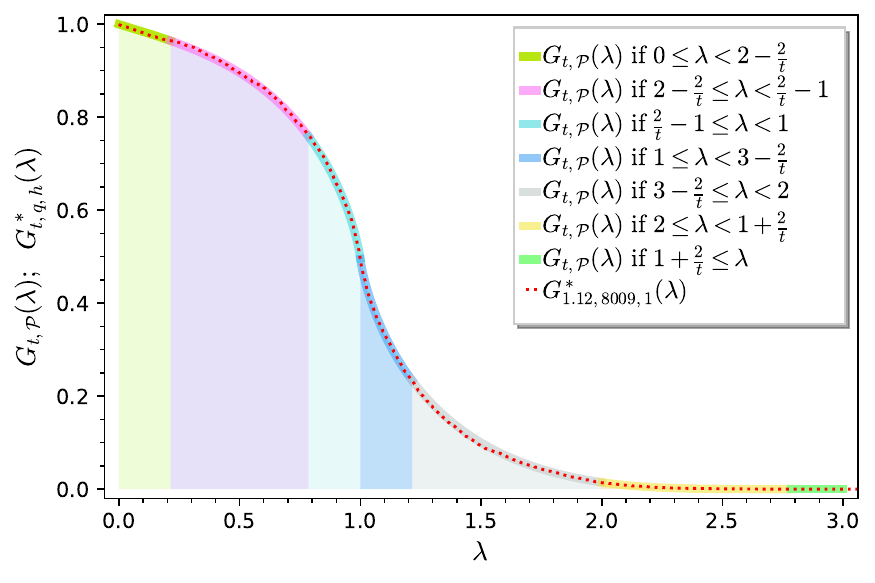}
\hfill
\caption{The graph of $G_{t,\cP}(\lambda)$ for $t=1.45$ (left) and $t=1.12$ (right).
In each case, 
the dotted graph of $G^*_{t,q,h}(\lambda)$ calculated numerically 
with $q=8009$ and $h=1$ is overlaid on top.
The shaded regions of the subgraphs indicate the regions where $G_{t,\cP}(\lambda)$ has different expressions.}
% \label{F:GtPLambda157-104}
\label{FigureGtPLambda145-112}
\end{figure}
\end{center}

In this range of $t$, the density function formula is given
for every $\lambda \neq 1$ by
$g_{t,\cP}(\lambda) =-G^\prime_{t,\cP}(\lambda)$ and the graphs are shown in Figure~\ref{Figure-density145-112}.

%%%%%%%%%%%%%%%%%%%%%%%%%%%%%%%%%%%%%%%%%%%%%%%%%%%
\begin{center}
\begin{figure}[htb]
\centering
\hfill
    \includegraphics[width=0.49\textwidth]{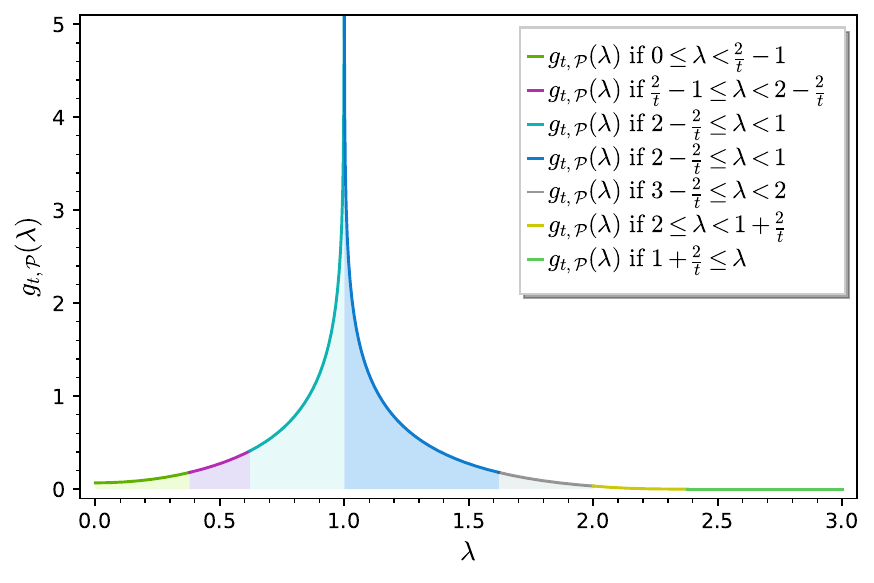}
\hfill\hfill
    \includegraphics[width=0.49\textwidth]{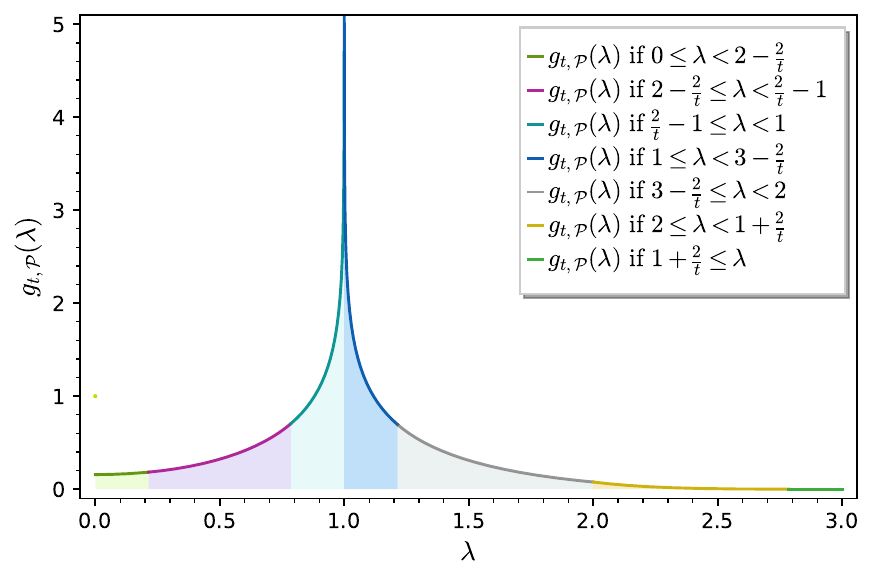}
\hfill
\caption{The graph of $g_{t,\cP}(\lambda)$ for $t=1.45$ (left) and $t=1.12$ (right).
}
% \label{F:GtPLambda157-104}
\label{Figure-density145-112}
\end{figure}
\end{center}

\begin{center}
\begin{figure}[htb]
\centering
\hfill
    \includegraphics[width=0.59\textwidth]{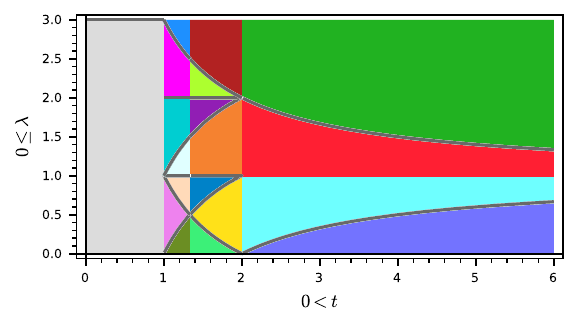}
\hfill
\mbox{}
\caption{The regions in which $G_{t,\cP}(\lambda)$ 
and $g_{t,\cP}(\lambda)$
change their expression for $t\ge 1$ and $\lambda\ge 0$.
}
\label{FigureDomains}
\end{figure}
\end{center}

\subsection{The tiles and the expressions of \texorpdfstring{$G_{t,\cP}(\lambda)$}{G(t,P}}
The curved polygonal regions bounded by the conditions imposed 
by the inequalities in Remark~\ref{RemarkA} are shown in Figure~\ref{FigureDomains}.
They tessellate the domain $[1,\infty)\times [0,\infty)$, and on each of them, say $\mathcal{D}$,
$G_{t,\cP}(\lambda)$ is given by the same expression for all $(t,\lambda)\in\mathcal{D}$.
Notice how the same formula holds for $G_{t,\cP}(\lambda)$ in neighbor domains situated 
one in the half planes $t>4/3$ and the other in $t<4/3$.
Furthermore, it can be noted that this fact is an extension of a pattern occurring for $t>2$, 
noting that $H_2(t,\lambda)$ appears also as the second case in the expression of 
$G_{t,\cP}(\lambda)$ given in Corollary~\ref{CorollaryG1} for $t\ge 2$.
It would be interesting to know how the pattern of the tiles and the corresponding formulas
continues for $t<1$.

\subsection{The neighbor-flips curves and their union}
We have pointed out in the introduction that it is experimentally observed 
that the $G^*_{t,q,h}(\lambda)$ are very different if $q$ is not prime, 
so the convergence of the sequence of gap distribution functions proved in Theorem~\ref{Thm1.1}
only occurs when $q$ tends to infinity through a sequence of prime numbers. 
Here, let's also observe that the reason behind
the existence of such different gap distribution functions
lies in the highly different form of the curves when $q$ is not prime.

For integers $q\ge 2$ and $0\le h\le q-1$, we consider the 
\textit{neighbor-flips} modular curves defined by
\begin{equation*}
    \NF(q,h) := \big\{\big(n^{-1}, (n+h)^{-1}\big) : n\in\Z/q\Z,\ \gcd(n,q)=\gcd(n+h,q)=1\big\}
    \subseteq [0,q]^2 .
\end{equation*}
This is the analogue of $\cA(q,h)$ and, geometrically,
it has the points arranged in the same way, 
except that they are not translated by $(-J,-J)$, so that the origin is the bottom left corner of the square $[0,q-1]^2$ that contains it, and not its center.

  \begin{figure}[htb]
\centering
\hfill
   \includegraphics[angle=0,width=0.24\textwidth]{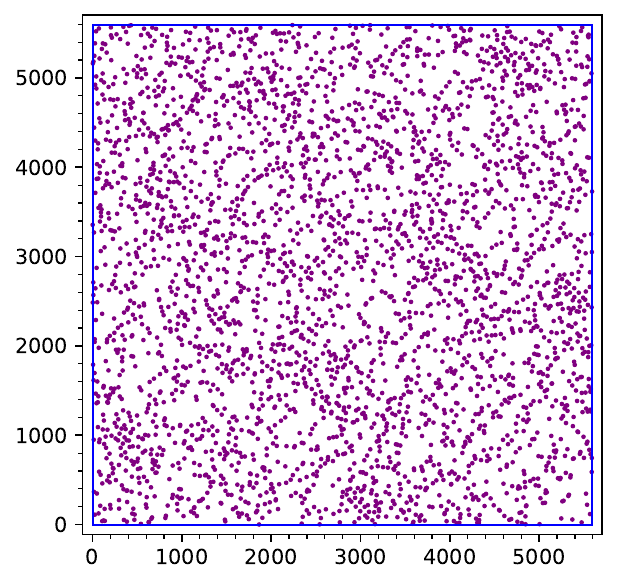}
\hfill\hfill
   \includegraphics[angle=0,width=0.24\textwidth]{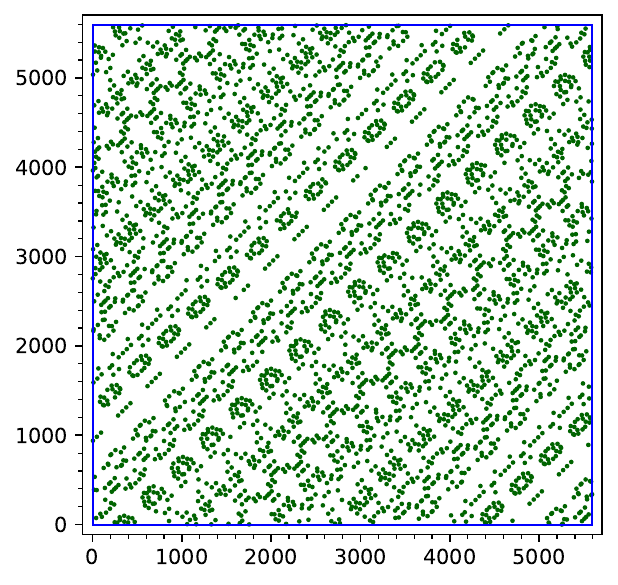}
\hfill\hfill
   \includegraphics[angle=0,width=0.24\textwidth]{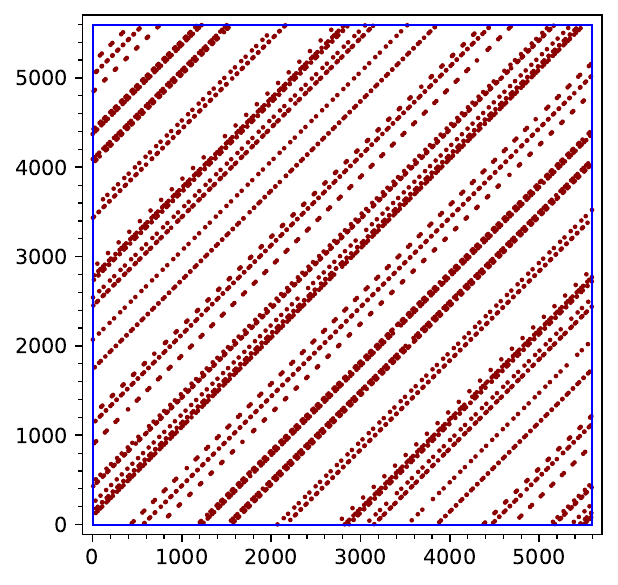}
\hfill\hfill
   \includegraphics[angle=0,width=0.24\textwidth]{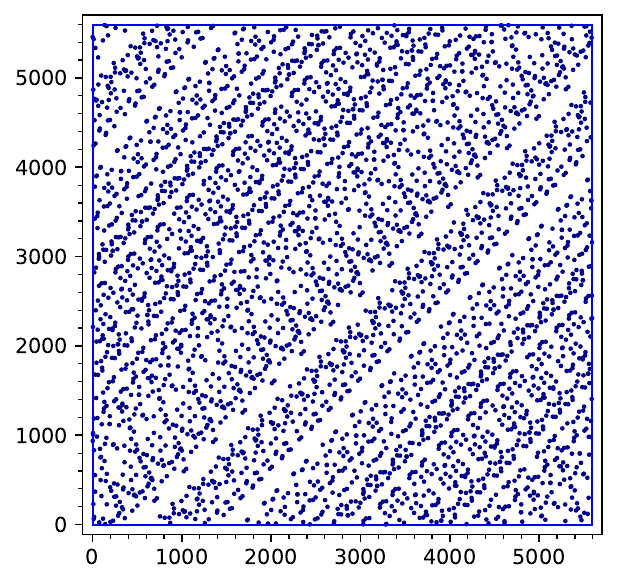}   
\hfill\mbox{}
\caption{The neighbor-flips curves $\NF(5593,h)$ 
for $q=5593=7\cdot 17\cdot 47$ and $h=4,34,141,289$.}
 \label{FigureNF5593}
 \end{figure}
 
Even for not necessarily highly composite moduli $q$, one notes
distinguished patterns featured by $\NF(q,h)$ when $q$
varies, or if $h$ varies for the same $q$
(see Figure~\ref{FigureNF5593} for such patterns).
Then, in cases like this, it might be possible to measure 
the size of the point-free regions of the curve and quantify 
the size of the large gaps that the observer sees from a certain distance.
It would be interesting to investigate whether there exist sequences of composite 
$q$'s for which a different limit gap distribution function exist.

Another distinction between the prime and non-prime moduli case can be seen in the unions 
\begin{equation*}
\NF(q):=\bigcup_{h=0}^{q-1}\NF(q,h).     
\end{equation*}
If $q$ is prime, $\NF(q)$ covers the square $[0,q-1]^2$ 
except for the lines $x=0$ and {\color{red}$y=q-h$}, 
while in the square remain uncovered periodic tubes of 
different widths depending on $q$, if $q$ is non-prime
(see the difference in Figure~\ref{FigureNFal}, where, for $q=100$ and $q=101$,
each $\NF(q,h)$ is drawn in a different color).

Let us also note that $\NF(q)$ is partitioned as a set by the curves 
$\NF(q,h)$ for $0\le h\le q-1$ because, according to the definition, 
there are no common points in $\NF(q,h)$ for distinct $h$'s.

\begin{center}
\begin{figure}[htb]
\centering
\hfill
   \includegraphics[angle=0,width=0.39\textwidth]{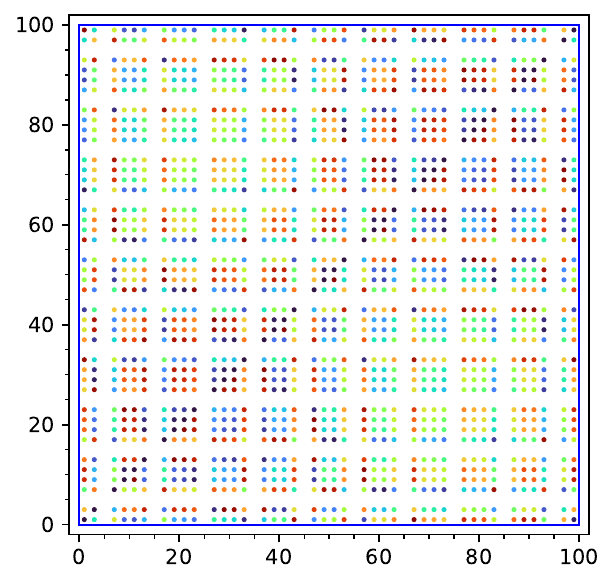}
\hfill
   \includegraphics[angle=0,width=0.39\textwidth]{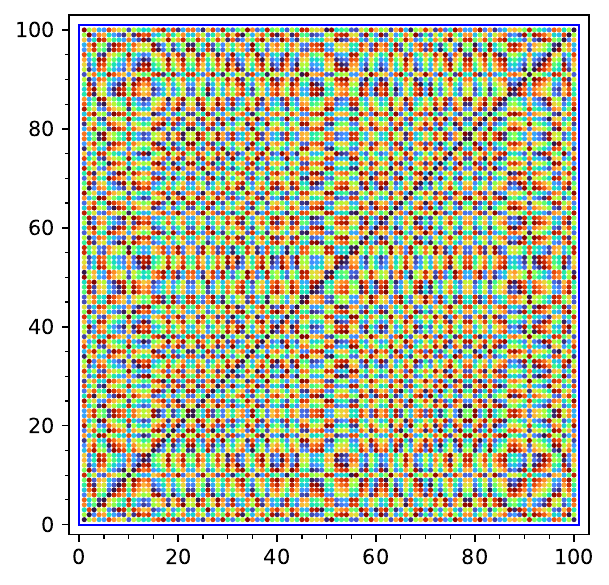}
\hfill\mbox{}
% \vspace*{-8mm}
\caption{The unions $\NF(100)$ (left) and $\NF(101)$ (right).
}
 \label{FigureNFal}
 \end{figure}
\end{center}

%%%%%%%%%%%%%%%%%%%%%%%%%%%%%%%%%%%%%%%%%%%%%%%%%%%%%%%%%%%%%%%%%%
\begin{figure}[tb]
\centering
\hfill
   \includegraphics[angle=0,width=0.24\textwidth]{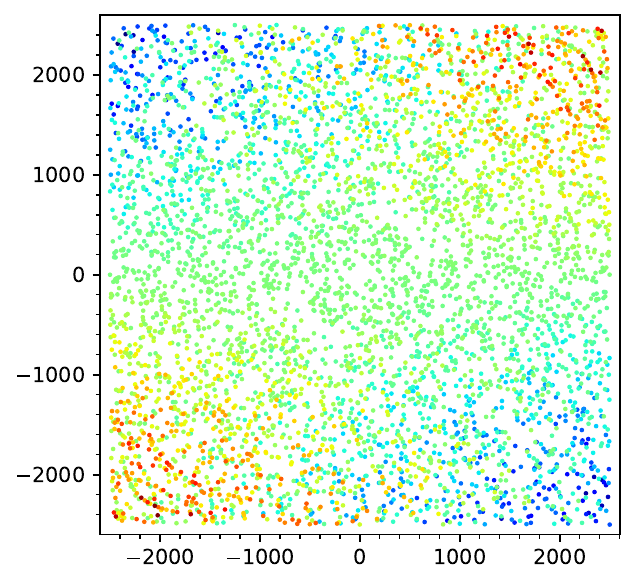}
\hfill\hfill
   \includegraphics[angle=0,width=0.24\textwidth]{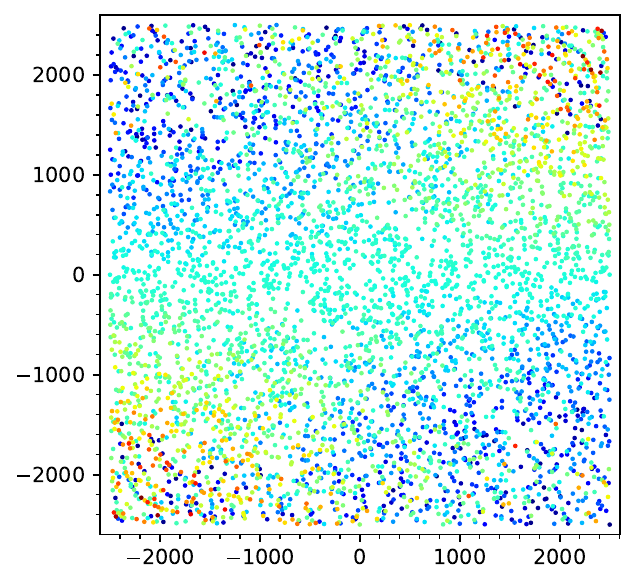}
\hfill\hfill
   \includegraphics[angle=0,width=0.24\textwidth]{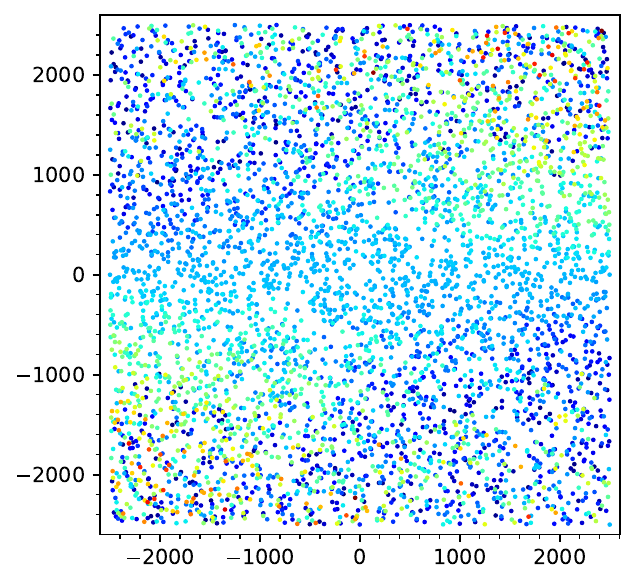}
\hfill\hfill
   \includegraphics[angle=0,width=0.24\textwidth]{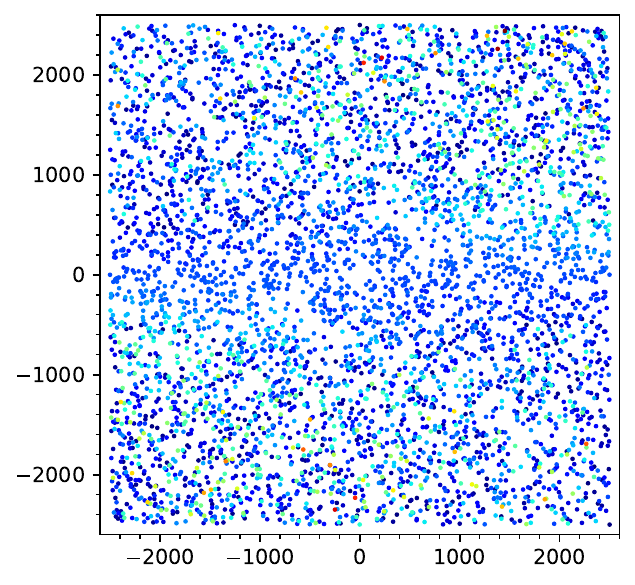}   
\hfill\mbox{}\\[2mm]
\hspace*{-3mm}
   \includegraphics[angle=0,width=0.24\textwidth]{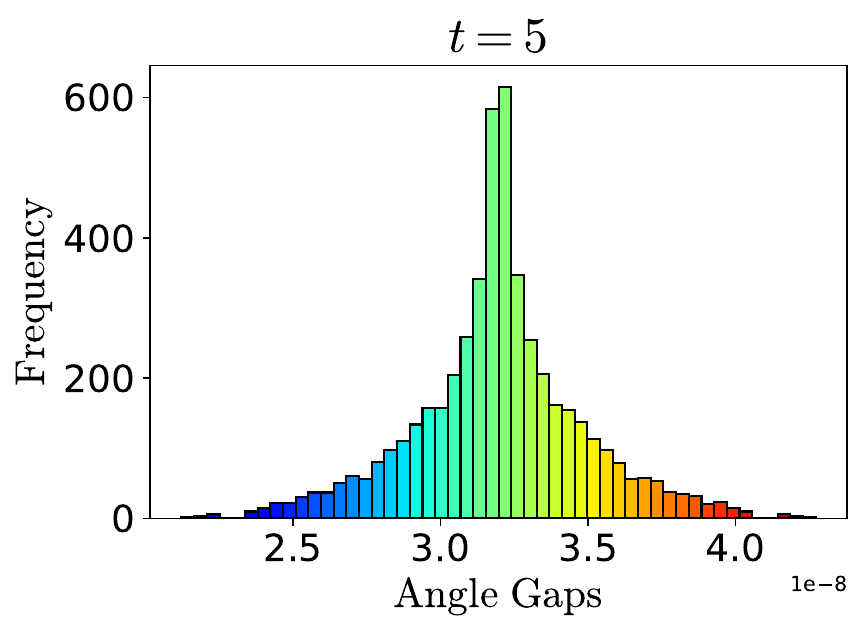}\
   \includegraphics[angle=0,width=0.24\textwidth]{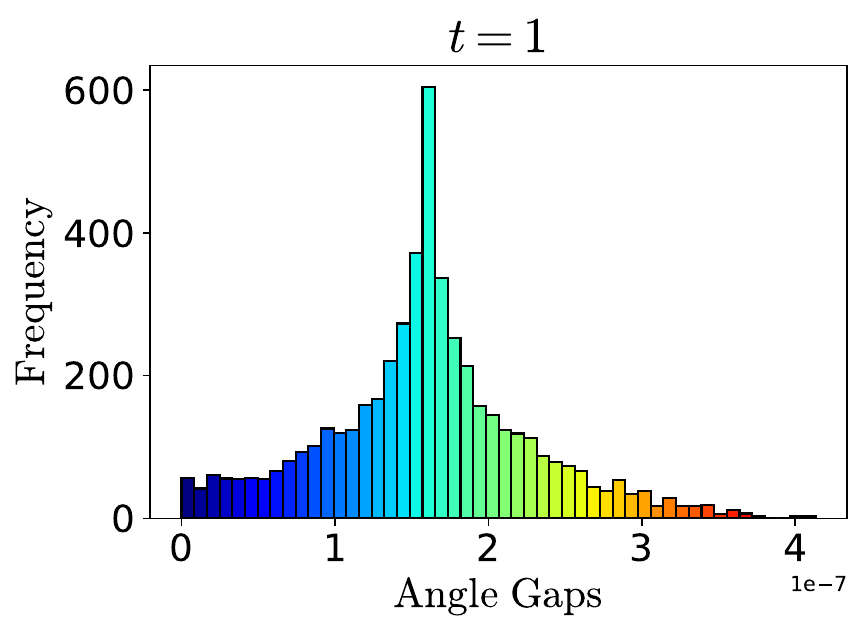}\ \ \
   \includegraphics[angle=0,width=0.24\textwidth]{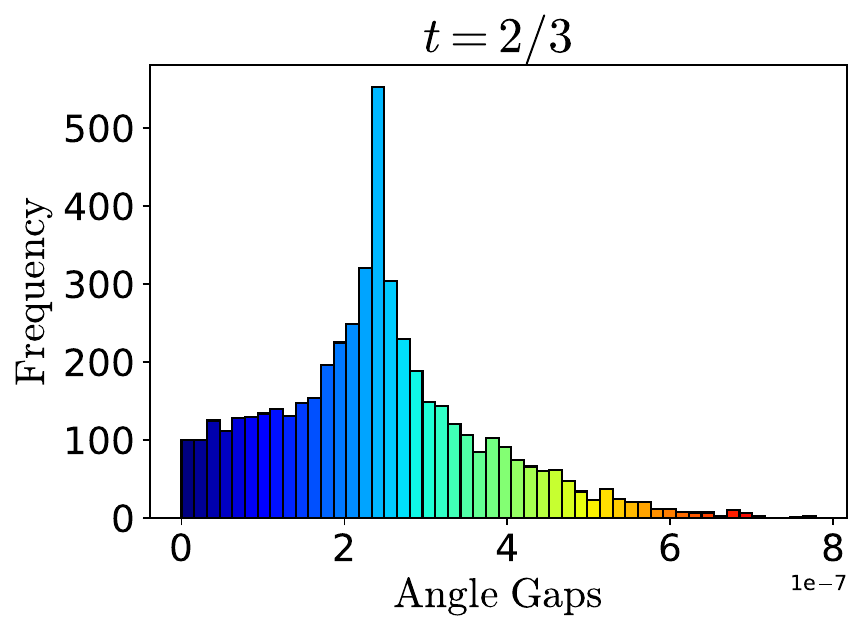}\
   \includegraphics[angle=0,width=0.24\textwidth]{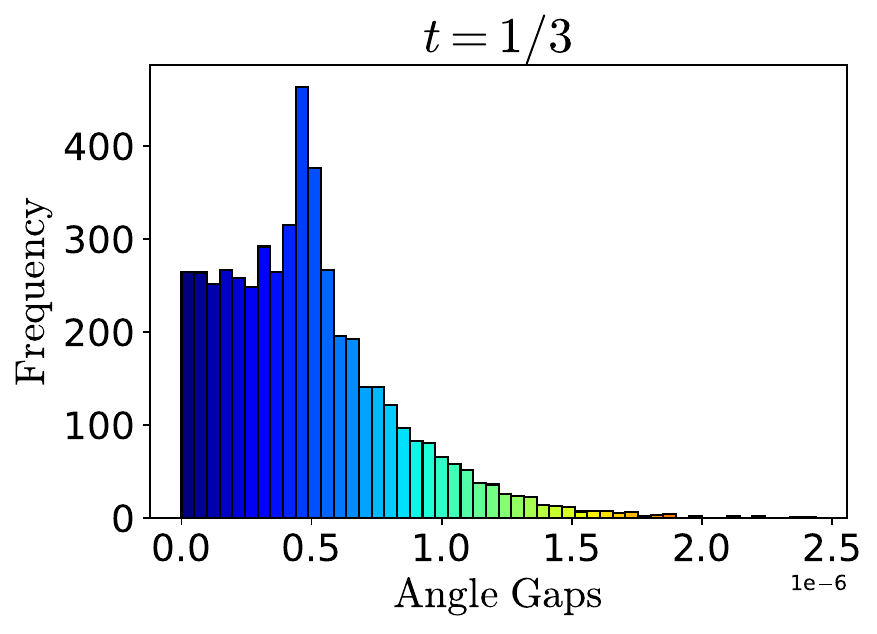} 
\caption{The curve $\cA(5003,1)$. 
The points are colored
by the size of the \textit{next-to point above gap} 
seen by the observer placed at the point of view
$(-tJ^2,0)$ for $t=5,1,2/3$ and $1/3$, respectively.
On the bottom line are shown the histograms
of the frequencies of the size of the gaps in the four cases above.}
 \label{FigureHistograms}
 \end{figure}

\subsection{
\texorpdfstring{$\cA(p,h)$ and $G^*_{t,p,h}(\lambda)$ for variable $t$}{A(p,h) and G*{t,p,h}(lambda) for variable t}
}
If $p$ is kept fixed,
depending on the size of $t>1/J$ (so that the point of view $(-tJ^2,0)$
remains outside of the smallest square that contains the curve $\cA(p,h)$)
the gaps between the angles towards the points seen by the observer
has a tendency to cluster, by their size, on certain regions of $\cA(p,h)$.
The shape of the clusters changes slowly if $t>2$ 
(when all points are seen by the observer in order as no interference occurs), and then,
when $t<2$ decreases, the clusters rapidly begin to blend together.

What happens is that when the observer is far away, the number of small angles 
is approximately equal to the number of large angles, their distribution being almost symmetrical. 
Then, when the observer gets closer and closer to the curve $\cA(p,h)$, the number of small angles seen overwhelms 
the number of large angles.
This can be observed in Figure~\ref{FigureHistograms}, where the points of 
$\cA(p,h)$ are colored, indicating the size of the angle 
that the observer sees towards the point that immediately follows in the counterclockwise rotation of the gaze.
There, the associated histograms indicate the frequency of the angles according to the position of the observer.

%%%%%%%%%%%%%%%%%%%%%%%%%%%%%%%%%%%%%%%%%%%%%%%%%%%%%%%%%%%%%%%%%%

The phenomenon is also caught in a different way by the gap distribution function,
which appears to bend along the curve $e^{-\lambda}$ as $t$ converges down to $1/J$ 
(see Figure~\ref{FigureGtoRandom}).
Note that the later is the gap distribution function for a modular curve 
whose points are pseudo-randomly distributed.
This supports the common belief that a distant viewpoint 
provides a clearer understanding of reality than a close one.

%%%%%%%%%%%%%%%%%%%%%%%%%%%%%%%%%%%%%%%%%%%%%%%%%%%%%%%%%%%%%%%%%%
% \begin{center}
% https://sage.syzygy.ca/jupyter/user/sucodru/notebooks/GLAMBDA_ANGLES_DIAGRAMS_FACTORIZATIONS/G_lambda_Inverse.ipynb#FIG-2-left
\begin{figure}[hb]
\centering
\hfill
   \includegraphics[angle=0,width=0.6\textwidth]{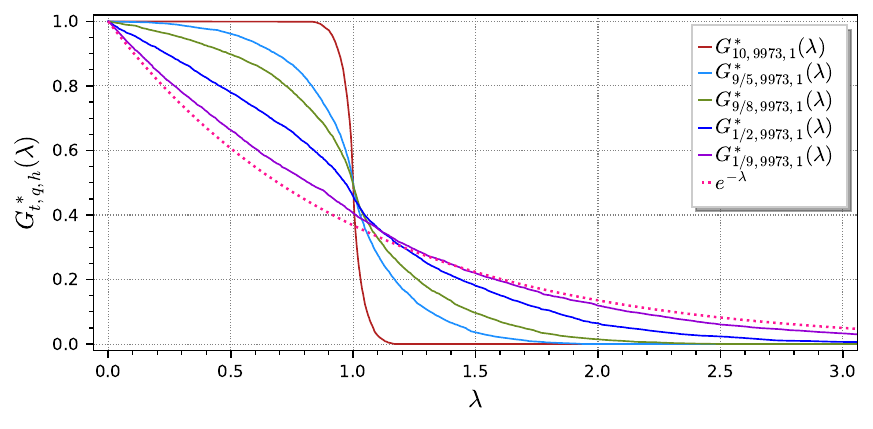}  
\hfill\mbox{}
% \vspace*{-3mm}
\caption{The approximations $G^*_{t,p,h}(\lambda)$ of
$G_{t,\cP}(\lambda)$ approaching $e^{-\lambda}$ as $t$ decreases
to $1/J$.
In the graphs, $p=9973$ is prime, $h=1$ and $t\in\{10,9/5,9/8,1/2,1/9\}$.}
 \label{FigureGtoRandom}
 \end{figure}
% \end{center}

We conclude by noting that the function $G_{t,\cP}(\lambda)$ obtained in Theorem~\ref{Thm1.1} 
closely matches the gap distribution function for the analogue modular curves 
whose points are randomly distributed under the condition that
just one point is placed above each integer abscissa.
Otherwise, for other curves that have multiple points above each integer abscissa,
the gap distribution functions are
essentially distinct.

\end{document}